\newcommand{\R}{{\mathbb R}}
\newtheorem{thm}{Theorem}[section]
\newtheorem{prop}[thm]{Proposition}
\newtheorem{lemma}[thm]{Lemma}
\theoremstyle{definition}
\newtheorem{defi}[thm]{Definition}
\numberwithin{equation}{section}
\begin{document}

\title{Towards Dynamic PET Reconstruction under Flow Conditions: Parameter Identification in a PDE Model}

\author{Louise Reips\thanks{Universidade Federal de Santa Catarina, Campus Blumenau, CEP 89065-300, Blumenau-SC, Brasil} \and Martin Burger\thanks{Institute for Computational and Applied Mathematics and Cells in Motion Cluster of Excellence, Westf\"alische Wilhelms-Universit\"at (WWU) M\"unster, Einsteinstr. 62, 48149 M\"unster, Germany} \and  Ralf Engbers$^\dagger$}
\date{}
\maketitle

\begin{abstract}
The aim of this paper is to discuss potential advances in PET kinetic models and direct reconstruction of kinetic parameters. As a prominent example we focus on a typical task in perfusion imaging and derive a system of transport-reaction-diffusion equations, which is able to include macroscopic flow properties in addition to the usual exchange between arteries, veins, and tissues. 

For this system we propose an inverse problem of estimating all relevant parameters from PET data.  We interpret the parameter identification as a nonlinear inverse problem, for which we formulate and analyze variational regularization approaches. 
For the numerical solution we employ gradient-based methods and appropriate splitting methods, which are used to investigate some test cases.
\end{abstract}

\section{Introduction}

Positron Emission Tomography (PET) is a nuclear medical imaging technique, used to visualize and quantify metabolic and physiological processes in the human body. The use of $^{18}$F-FDG PET is a  widely established method to quantify metabolism e.g. in tumours, but other investigations based on different tracers are still far from daily clinical use, although they offer great opportunities due to their inherent dynamical structure. For certain tracers yielding appropriate quality data at a reasonable time scale, kinetic modelling is an established technique based on estimating coefficients in ODE models from mean values of the acticity in large regions of interest (cf. \cite{Wernick}). For data of lower quality, e.g. in tracers like $H_2^{15}O$ with low half-life (fast decay), novel approaches based on the direct reconstruction of parameters from PET data instead of an intermediate image reconstruction step has evolved as a promising tool recently (cf. \cite{Benningkoestwuebbelingschaefburg,BenningHeinsBurg}), at least for preclinical investigations. The advances in direct reconstruction, with the time evolution of activity distributions constrained by a system of ODEs in subregions, raises the hope to obtain more detailed local pictures of physiological parameters like perfusion. An obvious benefit of imaging spatial variance in perfusion is that local defects can be detected and even quantified. However, with the localization the simple modelling by ODEs becomes questionable, in particular transport effects need to be taken into account and are not averaged out as in large regions of interest. 

Here we present a model-based approach to overcome those difficulties. We derive a set of partial differential equations able to represent the kinetic behavior of $H_2^{15}O$ PET tracers during cardiac perfusion. As in kinetic ODE models, we rely on a homogenized formulation, i.e. we do not resolve the single arteries, veins or even capillaries in the tissue region, and take into account the exchange of materials between artery, tissue and vein. The main difference to those models is the addition of transport and diffusion terms to take into account the local flow behaviour. This model predicts the tracer activity if the reaction rates, velocities, and diffusion coefficients are known, but again we are interested in the inverse problem of identifying those distributed parameters. Under a natural stationary flow assumption those parameters are only spatially dependent, but constant in time. In this way the inverse problem from dynamic data becomes hopefully overdetermined, which raises the hope to obtain decent reconstructions even for bad data statistics.

Using a model encoded in an operator $G$ mapping parameters $p$ to a time evolution of activity $u$, we can formulate the dynamic inverse problem in PET as
\begin{equation} \label{eq:inverseproblem}
	\wp(Ku(t)) = f(t), \qquad u=G(p)
\end{equation}
where $K$ is the forward operator $f(t)$ is a sequence of measured PET data on a domain $\Sigma$, and  $\wp(z)$ denotes a Poisson random variable with expectation $z$. 

The state-of-the art approach for solving problems with statistical noise models is a Bayesian formulation, in particular MAP (maximum a-posteriori probability estimation, cf. \cite{kaipio2006statistical}), which in our case yields a minimization problem of the form (cf. \cite{Shepp,Vardi,emtv})
\begin{equation} \label{variational0}
 (u,p) \in \arg \min_{(u,p) }   \int\limits_0^T \int_{\Sigma} Ku(t) - f(t) log(Ku(t)) dy~dt  + \alpha \mathcal{R}(p) 
\end{equation}
where $\mathcal{R}$ is a regularization fuctional, which in our case incorporates smoothness and a-priori information about typical values of parameters. We will in detail investigate the properties of the forward operator and the variational scheme when $G$ is the solution operator of a system of partial differential equations with linear diffusion, transport and reaction terms, where $p$ is the vector of spatially distributed parameters in such models. Moreover, we discuss appropriate schemes for the numerical solution, based on splitting of the PET forward operator and the PDE constraint, which allows to use well-established techniques for the subproblems of image reconstruction (split into different time steps) and the parameter identification problem.


The remainder of the paper is organized as follows: In Section 2 we introduce the novel PDE-based forward model and analyze its basic properties. In Section 3 we discuss the nonlinear inverse problem of estimating parameters from PET data, with particular focus on its variational regularization. In Section 4 we state the basic ingredients for our numerical solution methods, which we apply in some test cases in Section 5. 

\section{Three-Component Reaction-Diffusion Model}

In the following we introduce our macroscopic model of cardiovascular perfusion  and provide a basic mathematical analysis. 

Standard tracer kinetic modelling in PET is based on compartmental models (\cite{Carson2,Wernick}). In a PET image sequence, fixed spatial compartments are areas defined by the concentration of a radioactive tracer (called activity) that is a a temporal function. As a way of describing the interaction between these compartments one associates a constant capable to represent the velocity of absorption, diffusion of the radioactive tracer used during the PET scan. Thus data concerning the rate at which radioactive tracer is metabolized in the region of interest can be associated with temporal dynamics of the tracer in each compartment {\cite{Carson2}}. This standard approach yields an estimation problem for a finite number of parameters in a system of ODEs (or often just a nonlinear fitting problem since the ODE system can be solved explicitely). 

The kinetics of $^{18} F$ -fluorodeoxyglucose (FDG) and $H_2^{15}O$ are typical examples model\-led by compartmental schemes (cf.  \cite{Eriksson, Phelps01}). 
$^{13}N$-Ammonia \cite{Castelani01, Fiechtera01, Siegrist01} and $H_2^{15}O$ \cite{Ahn, Iida01, Katoh, LuedermannS,SchaefersSpinks} are the usual tracers used to estimate regional myocardial blood perfusion.
In {\cite{Muzik}} a two-compartmental model and in {\cite{Kuhle, Krivokapich}} a three-compartmental model are applied to the analysis of myocardial PET images. Direct reconstruction of distributed parameters in compartmental models from PET data are discussed e.g. in \cite{Kamasak,BenningDA,Benningkoestwuebbelingschaefburg,BenningHeinsBurg}.

We now derive our spatially distributed model for the kinetic behavior of $H_2^{15}O$ PET tracers during cardiac perfusion. 
Let $\Omega \subset \R^3$ be a bounded domain representing the relevant region on which the image and parameters are to be reconstructed and let $t \in [0,T]$. We think of $x \in \Omega$ as a macroscopic variable homogenizing microscopic flow patterns. The following processes are modelled in perfusion:
\begin{itemize}
\item Activity is transported to the tissue region in arteries. 

\item Inside the tissue, small capillaries are transporting blood, which is usually referred to as perfusion.

\item Activity is transported out of the tissue region in veins.

\item The tracer in any region is subject to radioactive decay with rate $k_0$.

\end{itemize}
For our model it is hence natural to describe the tracer activity by three parts, namely those in arteries, veins, and in tissue (capillaries). Hence, we have
\begin{equation} \label{eq:udefinition}
	u(x,t)=  C_\mathcal{A}(x,t) + C_\mathcal{V}(x,t) + C_\mathcal{T}(x,t).
\end{equation}
To each of the concentrations we associate a local velocity $V_{\mathcal{A}}, V_{\mathcal{V}}, V_{\mathcal{T}}$ respectively, and a local diffusion coefficient $ D_{\mathcal{A}}, D_{\mathcal{V}}, D_{\mathcal{T}}$. Those can be thought of as homogenized quantities at the relevant scale for our reconstruction, in particular for the microscopic capillaries. In principle we need to expect an anisotropic diffusion this way, but since we are rather interested in strong transport and only small corrections by diffusion we restrict our interest to isotropic diffusion, in particular with the goal of subsequent parameter identification. 

In addition we model the exchange of activities from arteries into tissue (as a linear reaction with rate $k_1$), from tissue into veins (with rate $k_2$) and from veins into arteries (with rate $k_3$). The latter is rather an idealization to close the model in larger domains than the heart. Thus, we obtain the system

\begin{equation}\label{partialCA}
\begin{split}
\partial_t C_\mathcal{A}(x,t) & = -k_0 C_{\mathcal{A}}(x,t)  - k_1(x)C_{\mathcal{A}}(x,t) + k_3(x) C_{\mathcal{V}}(x,t) + \underbrace{\nabla \cdot(V_{\mathcal{A}}(x) C_{\mathcal{A}}(x,t))}_{Transport} \\
& + \underbrace{\nabla \cdot(D_{\mathcal{A}}(x)\nabla C_{\mathcal{A}}(x,t))}_{Diffusion}\\
\end{split}
\end{equation}
\begin{equation}\label{partialCT}
\begin{split}
\partial_t C_{\mathcal{T}}(x,t) & = -k_0 C_{\mathcal{T}}(x,t)  + k_1(x)C_{\mathcal{A}}(x,t) - k_2(x) C_{\mathcal{T}}(x,t) + \nabla \cdot(V_{\mathcal{T}}(x) C_{\mathcal{T}}(x,t))\\
& + \nabla \cdot(D_{\mathcal{T}}(x)\nabla C_{\mathcal{T}}(x,t))\\
\end{split}
\end{equation}
\begin{equation}\label{partialCV}
\begin{split}
\partial_t C_{\mathcal{V}}(x,t)  & = -k_0 C_{\mathcal{V}}(x,t)  - k_3(x)C_{\mathcal{V}}(x,t) + k_2(x) C_{\mathcal{T}}(x,t) + \nabla \cdot(V_{\mathcal{V}}(x) C_{\mathcal{V}}(x,t)) \\
& + \nabla \cdot(D_{\mathcal{V}}(x)\nabla C_{\mathcal{V}}(x,t))\\
\end{split}
\end{equation}

This system is supplemented by boundary conditions
\begin{equation}\label{boundaryconditionsflux}
\begin{split}
(D \nabla C_{\mathcal{A/T/V}} + VC_{\mathcal{A/T/V}})\cdot \textit{n} & = j_{\mathcal{A/T/V}}^{in} \hspace{0.65cm}  \Gamma \subset \partial \Omega \\
(D \nabla C_{\mathcal{A/T/V}} + VC_{\mathcal{A/T/V}})\cdot \textit{n} & = C_{\mathcal{A/T/V}}V_{\mathcal{A/T/V}}^{out}  \hspace{0.5cm} \partial \Omega / \Gamma\\
\end{split}
\end{equation}
where $\Gamma$ denotes an inflow part of the boundary (as well as isolated parts where $j_{in} = 0$).

Since the tracer is injected into arteries, the typical initial conditions to be considered are
\begin{align}
	C_\mathcal{A}(x,0) &= C^0(x) \nonumber \\
	C_\mathcal{V}(x,0) &= 0 \label{initialconditons} \\
	C_\mathcal{T}(x,0) &= 0 \nonumber 
\end{align}
for $x \in \Omega$, with a given initial distribution $C^0$.

The differential equations \eqref{partialCA}, \eqref{partialCT}, \eqref{partialCV} can be written as an abstract evolution equation
\begin{equation}
	\partial_t {\bf C} = {\cal L} {\bf C}
\end{equation}
for 
\begin{equation}
	{\bf C} = ( C_{\mathcal{A}}, C_{\mathcal{T}}, C_{\mathcal{V}}) 
\end{equation}
with a parabolic operator ${\cal L}$, which will be a fundamental property for our analysis below.

\subsection{Existence and Uniqueness of Solutions}

We shall look for a weak solution 
\begin{equation}
	{\bf C} \in {\cal W}:= L^2(0,T;H^1(\Omega)) \cap H^1(0,T;H^{-1}(\Omega)). 
\end{equation}
for given parameters in a set
\begin{equation}\label{D0}
\mathcal{D}_0:=\{ k_i \in L^2(\Omega), V_{\mathcal{A}/\mathcal{V}/\mathcal{T}} \in L^{\infty}(\Omega), D_{\mathcal{A}/\mathcal{V}/\mathcal{T}} \in L^{\infty}(\Omega), k \geq 0, D_{\mathcal{A}/\mathcal{V}/\mathcal{T}} \geq D_0 > 0 \}.
\end{equation}
We shall below use the notation
\begin{equation} \label{equationdep}
 p(x) = (k_1(x), k_2(x), k_3(x), D_{\mathcal{T}}(x), D_{\mathcal{A}}(x), D_{\mathcal{V}}(x), V_{\mathcal{T}}(x), V_{\mathcal{A}}(x), V_{\mathcal{V}}(x))
\end{equation}
for the vector of all parameters. 

Moreover, throughout the whole paper we make the following assumptions concerning boundary and initial conditions without further notice:
\begin{itemize}
\item $C^0 \in L^2(\Omega)$, $C^0 \geq 0$ almost everywhere. 

\item $ V_{\mathcal{A/T/V}}^{out} \in L^\infty([0,T]\times \partial \Omega)$.

\item $j^{in}_{\mathcal{A}/\mathcal{V}/\mathcal{T}} \in L^2(0,T; H^{-1/2}(\Gamma))$ and $\langle j^{in}_{\mathcal{A}/\mathcal{V}/\mathcal{T}}, \varphi \rangle \geq 0$ for the trace of every nonnegative function $\varphi \in L^1(0,T;H^1(\Omega))$. 
\end{itemize} 

Standard techniques for parabolic systems (cf. \cite{Dautray}) yield the following basic results for the solution (cf. \cite{Reips} for details of the proofs):
\begin{thm} \label{existenceuniquenessthm}
For each $p \in \mathcal{D}_0$ there exists a unique solution 
${\bf C} \in {\cal W}$ of \eqref{partialCA}-\eqref{initialconditons}. Moreover, $	C_\mathcal{A}$, $	C_\mathcal{T}$, and $	C_\mathcal{V}$ are nonnegative almost everywhere in $[0,T] \times \Omega$.
\end{thm}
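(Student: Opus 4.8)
The plan is to recast \eqref{partialCA}--\eqref{partialCV} together with the flux conditions \eqref{boundaryconditionsflux} and the initial data \eqref{initialconditons} as an abstract linear parabolic problem on the Gelfand triple $V := H^1(\Omega)^3 \hookrightarrow H := L^2(\Omega)^3 \hookrightarrow V'$. Multiplying each equation by a test function $\varphi = (\varphi_\mathcal{A},\varphi_\mathcal{T},\varphi_\mathcal{V}) \in V$, integrating by parts in the transport and diffusion terms, and inserting \eqref{boundaryconditionsflux} (so that the inflow fluxes $j^{in}$ enter as data while the outflow terms $C\,V^{out}$ remain as boundary integrals), one obtains a weak formulation $\langle \partial_t \mathbf{C},\varphi\rangle + a(\mathbf{C},\varphi) = \ell(\varphi)$ with a time-independent bilinear form $a$ on $V \times V$ and a source $\ell \in L^2(0,T;V')$ built from $j^{in}$. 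Because the parameters in $\mathcal{D}_0$ are stationary, $a$ does not depend on $t$, and the whole argument reduces to verifying the hypotheses of the standard existence theorem for variational parabolic evolution equations (Lions; cf.\ \cite{Dautray}).

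Existence and uniqueness then follow once two properties of $a$ are checked. Boundedness on $V\times V$ is immediate for the diffusion and transport contributions since $D_{\mathcal A/\mathcal T/\mathcal V}, V_{\mathcal A/\mathcal T/\mathcal V} \in L^\infty(\Omega)$; for the reaction terms, which only carry $k_i \in L^2(\Omega)$, I would use the Sobolev embedding $H^1(\Omega)\hookrightarrow L^6(\Omega)$ valid in three dimensions together with H\"older's inequality (the exponents close, e.g.\ $\tfrac12+\tfrac16+\tfrac13=1$), while the outflow boundary term is handled by the trace theorem since $V^{out}\in L^\infty$. The decisive estimate is a G{\aa}rding inequality: the ellipticity bound $D_{\mathcal A/\mathcal T/\mathcal V}\ge D_0>0$ yields $\sum_i \int_\Omega D_i |\nabla C_i|^2 \ge D_0 \|\nabla \mathbf{C}\|_H^2$, the transport terms $\int_\Omega V_i C_i\cdot\nabla C_i$ and the outflow boundary terms are absorbed by Young's and the trace inequality using a small fraction of this diffusion coercivity, and the diagonal and off-diagonal reaction terms are lower order and shifted into a multiple of $\|\mathbf{C}\|_H^2$. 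This gives $a(\mathbf{C},\mathbf{C}) + \lambda\|\mathbf{C}\|_H^2 \ge \beta \|\mathbf{C}\|_V^2$, whence Lions' theorem provides a unique $\mathbf{C}\in\mathcal{W}$; the embedding $\mathcal{W}\hookrightarrow C([0,T];H)$ makes the initial condition \eqref{initialconditons} meaningful.

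For the nonnegativity I would argue by testing with the negative parts. Writing $C_i^- := \max(-C_i,0)\in L^2(0,T;H^1(\Omega))$ and using the admissibility of $-C_i^-$ as a test function together with $\langle \partial_t C_i,-C_i^-\rangle = \tfrac12\tfrac{d}{dt}\|C_i^-\|_{L^2(\Omega)}^2$, the diffusion term contributes $-\int_\Omega D_i|\nabla C_i^-|^2 \le -D_0\|\nabla C_i^-\|^2$, the transport and outflow boundary terms are again absorbed via Young and trace inequalities, and the diagonal reaction and inflow terms have the favourable sign ($k_0,k_i\ge 0$ and $\langle j^{in}_i,C_i^-\rangle\ge 0$ by the standing assumption). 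The point is the cooperative cyclic structure of the coupling: each source $k_3C_\mathcal{V}$, $k_1C_\mathcal{A}$, $k_2C_\mathcal{T}$ carries a nonnegative rate, so splitting $C_j = C_j^+ - C_j^-$ the mixed terms $-\int_\Omega k\, C_j^+ C_i^-\le 0$ drop out, and only the cross terms $\int_\Omega k\, C_j^- C_i^-$ between negative parts survive, which I bound by $L^2$--$L^6$ H\"older and Young and absorb as before. Summing over the three components gives $\tfrac{d}{dt}\sum_i\|C_i^-\|_{L^2}^2 \le \lambda\sum_i\|C_i^-\|_{L^2}^2$; since $C^0\ge 0$ and $C_\mathcal{T}(0)=C_\mathcal{V}(0)=0$ all initial negative parts vanish, and Gronwall's inequality forces $C_i^-\equiv 0$, i.e.\ nonnegativity almost everywhere.

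I expect the main obstacle to be the low $L^2$-regularity of the reaction rates $k_i$: unlike the $L^\infty$ transport and diffusion coefficients, these do not induce bounded multiplication operators on $L^2(\Omega)$, so every reaction contribution---both in the G{\aa}rding estimate and in the negative-part argument---must be routed through the three-dimensional Sobolev embedding and carefully balanced against the diffusion coercivity $D_0$. The coupling being off-diagonal with only $L^2$ rates is what makes the bookkeeping delicate; the sign structure of the system itself is entirely benign.
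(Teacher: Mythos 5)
Your proposal is correct and takes essentially the same route as the paper, which for this theorem simply invokes ``standard techniques for parabolic systems (cf.\ \cite{Dautray})'' with details deferred to \cite{Reips}: the Lions variational theory on the Gelfand triple $H^1(\Omega)^3 \hookrightarrow L^2(\Omega)^3 \hookrightarrow (H^1(\Omega)^3)'$ with boundedness and a G{\aa}rding estimate (the $k_i \in L^2(\Omega)$ reaction terms handled via $H^1 \hookrightarrow L^6$ and interpolation, exactly as you indicate), followed by negative-part testing exploiting the cooperative sign structure and Gronwall for nonnegativity. One minor inaccuracy: the bilinear form is not time-independent, since the outflow datum $V_{\mathcal{A/T/V}}^{out}$ lies in $L^\infty([0,T]\times\partial\Omega)$ and so depends on $t$, but Lions' theorem admits measurable time-dependent forms with uniform boundedness and G{\aa}rding constants, so nothing in your argument breaks.
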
 

As a consequence, one obtains a well-defined forward map to the activity distribution:
\begin{thm} \label{existencethm}
For every $p \in \mathcal{D}_0$ the activity distribution $u$ defined by \eqref{eq:udefinition} is uniquely defined in $L^2(0,T;H^1(\Omega)) \cap H^1(0,T;H^{-1}(\Omega))$ and is nonnegative almost everywhere.
\end{thm}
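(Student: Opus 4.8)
The plan is to obtain this as a direct corollary of Theorem~\ref{existenceuniquenessthm}, exploiting the fact that $u$ is defined in \eqref{eq:udefinition} as the sum of the three components and that the solution space $\mathcal{W}$ is linear. First I would invoke Theorem~\ref{existenceuniquenessthm} to fix, for the given $p \in \mathcal{D}_0$, the unique weak solution $\mathbf{C} = (C_\mathcal{A}, C_\mathcal{T}, C_\mathcal{V}) \in \mathcal{W}$, together with the three nonnegativity statements $C_\mathcal{A}, C_\mathcal{T}, C_\mathcal{V} \geq 0$ almost everywhere in $[0,T] \times \Omega$. Everything else is then a matter of transferring these properties to the single function $u$.

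For the membership claim I would observe that $\mathcal{W} = L^2(0,T;H^1(\Omega)) \cap H^1(0,T;H^{-1}(\Omega))$ is a vector space, being the intersection of two Bochner--Sobolev spaces each closed under addition. Hence $u = C_\mathcal{A} + C_\mathcal{T} + C_\mathcal{V}$, as a finite sum of elements of $\mathcal{W}$, again lies in $\mathcal{W}$; the triangle inequality $\|u\|_{\mathcal{W}} \leq \|C_\mathcal{A}\|_{\mathcal{W}} + \|C_\mathcal{T}\|_{\mathcal{W}} + \|C_\mathcal{V}\|_{\mathcal{W}}$ makes the bound explicit. Uniqueness of $u$ then follows from uniqueness of the triple $\mathbf{C}$: since the summation map $\mathbf{C} \mapsto C_\mathcal{A} + C_\mathcal{T} + C_\mathcal{V}$ is single-valued, the uniqueness asserted in Theorem~\ref{existenceuniquenessthm} transfers immediately to $u$.

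Finally, nonnegativity of $u$ is inherited pointwise almost everywhere: the sum of three functions that are each nonnegative off a null set is nonnegative off the union of those three null sets, which is again null. This completes every assertion of the statement.

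There is essentially no hard analytic step here, since all the substantive work is already contained in Theorem~\ref{existenceuniquenessthm}; the only items to check are the elementary closure of $\mathcal{W}$ under addition and the preservation of the almost-everywhere sign under a finite sum. If any point warrants care, it is merely the correct reading of \emph{uniquely defined}: the statement asserts that $u$ is determined by $p$, and not that the decomposition into $C_\mathcal{A}, C_\mathcal{T}, C_\mathcal{V}$ can be recovered from $u$, so no injectivity of the summation map is needed.
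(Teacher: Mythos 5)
Your proposal is correct and matches the paper's own treatment: the paper presents Theorem~\ref{existencethm} as an immediate consequence of Theorem~\ref{existenceuniquenessthm}, with $u$ inheriting membership in $\mathcal{W}$, uniqueness, and almost-everywhere nonnegativity directly from the three components via linearity of the space and the summation map. Your closing remark on the correct reading of \emph{uniquely defined} (determination of $u$ by $p$, with no injectivity of $\mathbf{C} \mapsto u$ needed) is exactly the right caveat and consistent with the paper's intent.
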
 

\subsection{Continuous Dependence and Differentiability}

In order to define the nonlinear forward operator $G$, we first introduce the parameter to solution map in the form
\begin{equation}
	S: {\cal D}_0 \rightarrow {\cal W}, \quad p \mapsto {\bf C}, 
\end{equation}
which is well defined by Theorem \ref{existenceuniquenessthm}. Based on $S$ we obtain the map $G=(1,1,1) \cdot S$ as 
\begin{equation}
	G: {\cal D}_0 \rightarrow L^2(0,T;H^1(\Omega)) \cap H^1(0,T;H^{-1}(\Omega), \quad p \mapsto u.
\end{equation}

A direct estimate based on the properties of the parabolic system yields the following result (cf. \cite{Reips}):
\begin{prop} \label{lipschitzthm}
The map $S$ and consequently $G$ is locally Lipschitz-continuous. 
\end{prop}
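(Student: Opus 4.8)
The plan is to prove Lipschitz continuity by the energy method applied to the difference of two solutions. Fix a bounded neighborhood $\mathcal{N}\subset\mathcal{D}_0$ and two parameter vectors $p_1,p_2\in\mathcal{N}$ with associated solutions $\mathbf{C}_1=S(p_1)$ and $\mathbf{C}_2=S(p_2)$. Writing $\mathcal{L}(p)$ for the parabolic operator with coefficients $p$, the difference $\mathbf{W}:=\mathbf{C}_1-\mathbf{C}_2$ satisfies
$$\partial_t\mathbf{W}=\mathcal{L}(p_1)\mathbf{W}+\big(\mathcal{L}(p_1)-\mathcal{L}(p_2)\big)\mathbf{C}_2=:\mathcal{L}(p_1)\mathbf{W}+F,$$
with homogeneous initial data (both solutions share $C^0$) and cancelling inflow fluxes (both share $j^{in}$), so that $\mathbf{W}$ solves the same parabolic system analyzed in Theorem \ref{existenceuniquenessthm}, now driven by the right-hand side $F$ and carrying zero inflow data together with the outflow condition on $\partial\Omega\setminus\Gamma$.

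The first step is to estimate $F$ in $L^2(0,T;H^{-1}(\Omega))$. The source collects one contribution per type of coefficient. The diffusion differences contribute $\nabla\cdot((D^1-D^2)\nabla\mathbf{C}_2)$; since $D^1-D^2\in L^\infty$ and $\nabla\mathbf{C}_2\in L^2(0,T;L^2)$, this lies in $L^2(0,T;H^{-1})$ with norm controlled by $\|D^1-D^2\|_{L^\infty}\|\mathbf{C}_2\|_{L^2(0,T;H^1)}$, and the transport differences $\nabla\cdot((V^1-V^2)\mathbf{C}_2)$ are handled identically. The reaction differences $(k_i^1-k_i^2)\mathbf{C}_2$ are the only delicate piece, because $k_i\in L^2(\Omega)$ rather than $L^\infty$; here I would test against $\varphi\in H^1(\Omega)$ and invoke the Sobolev embedding $H^1(\Omega)\hookrightarrow L^6(\Omega)$ (valid for $\Omega\subset\R^3$) together with Hölder's inequality with exponents $(2,6,6)$ to obtain $|\langle(k_i^1-k_i^2)\mathbf{C}_2,\varphi\rangle|\le C\|k_i^1-k_i^2\|_{L^2}\|\mathbf{C}_2\|_{H^1}\|\varphi\|_{H^1}$. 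Integrating in time then yields $\|F\|_{L^2(0,T;H^{-1})}\le C\,\|p_1-p_2\|\,\|\mathbf{C}_2\|_{\mathcal{W}}$, with $\|p_1-p_2\|$ measured in the natural product norm on $\mathcal{D}_0$.

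The second step is the a priori estimate for $\mathbf{W}$. Testing the equation with $\mathbf{W}$, the coercivity of the diffusion part (guaranteed by $D_{\mathcal{A}/\mathcal{T}/\mathcal{V}}\ge D_0>0$) dominates, while the first-order transport terms, the boundary outflow term (controlled by a trace inequality), and the zeroth-order reaction terms are absorbed via Young's inequality and the same Sobolev embedding, leaving a Gronwall inequality. This gives $\|\mathbf{W}\|_{\mathcal{W}}\le C\,\|F\|_{L^2(0,T;H^{-1})}$ with a constant $C$ depending only on the uniform coercivity and boundedness constants over $\mathcal{N}$ (in particular $D_0$, the $L^\infty$ bounds on velocities and diffusivities, and the $L^2$ bounds on the reaction rates). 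Combined with the uniform bound on $\|\mathbf{C}_2\|_{\mathcal{W}}$ furnished by the existence estimates of Theorem \ref{existenceuniquenessthm}, one concludes $\|S(p_1)-S(p_2)\|_{\mathcal{W}}\le L\,\|p_1-p_2\|$ with $L=L(\mathcal{N})$, i.e.\ local Lipschitz continuity of $S$. Since $G=(1,1,1)\cdot S$ is the composition of $S$ with a bounded linear map, the same estimate transfers to $G$.

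The main obstacle is twofold, and both points concern the reaction coefficients. First, the low regularity $k_i\in L^2(\Omega)$ forces the use of $H^1\hookrightarrow L^6$ and confines the argument to dimension $\le 3$, which is exactly the relevant case $\Omega\subset\R^3$; in higher dimensions the product $(k_i^1-k_i^2)\mathbf{C}_2$ would not obviously lie in $H^{-1}$. Second, ensuring that the Gronwall constant and the a priori bound on $\|\mathbf{C}_2\|_{\mathcal{W}}$ are genuinely \emph{uniform} over $\mathcal{N}$ — rather than depending on the individual parameters — is precisely what upgrades continuity to local Lipschitz continuity, and it must be tracked carefully through the energy estimates.
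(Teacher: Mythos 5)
Your proposal is correct and takes essentially the same route as the paper, whose own (merely sketched) justification of Proposition \ref{lipschitzthm} is exactly such a ``direct estimate based on the properties of the parabolic system'' for the difference of two solutions, with details deferred to \cite{Reips}: subtract the weak formulations, put the coefficient differences applied to $\mathbf{C}_2$ into an $L^2(0,T;H^{-1})$ source (using $H^1(\Omega)\hookrightarrow L^6(\Omega)$ for the $L^2$ reaction rates), and close with coercivity, Young, trace and Gronwall estimates that are uniform over a bounded parameter neighborhood. One cosmetic remark: your H\"older exponents $(2,6,6)$ have reciprocals summing to $5/6<1$, which is harmless on the bounded domain $\Omega$ (pad with the constant function at the cost of a factor $|\Omega|^{1/6}$), though the cleaner bookkeeping is $(2,6,3)$ combined with $\|\varphi\|_{L^3}\le c\|\varphi\|_{H^1}$, and the same interpolation $\|\cdot\|_{L^3}\le\|\cdot\|_{L^2}^{1/2}\|\cdot\|_{L^6}^{1/2}$ is what lets the cross-reaction terms $\int k_3 W_{\mathcal{V}}W_{\mathcal{A}}\,dx$ be absorbed with a small $H^1$ power despite $\|k_3\|_{L^2}$ not being small.
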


A more lengthy calculation yields the differentiability of the forward map (cf. again \cite{Reips} for details):
\begin{thm}
The map $S$ is Frechet-differentiable on ${\cal D}_0$ with derivative
$$ (\Phi_{\mathcal{A}},\Phi_{\mathcal{T}},\Phi_{\mathcal{V}}) =S'(p){\tilde p} $$
being the unique weak solution of the system 
\begin{align}	
&\partial_t \Phi_{\mathcal{A}}(x,t) + (k_0+k_1) \Phi_{\mathcal{A}}  - k_3 \Phi_{\mathcal{V}} - \nabla \cdot(V_{\mathcal{A}} \Phi_{\mathcal{A}} - \nabla \cdot(D_{\mathcal{A}}\nabla \Phi_{\mathcal{A}})) \nonumber \\
	& \qquad = - \tilde k_1 C_{\mathcal{A}}  + \tilde k_3 C_{\mathcal{V}} + \nabla \cdot(\tilde V_{\mathcal{A}} C_{\mathcal{A}} + \nabla \cdot(\tilde D_{\mathcal{A}}\nabla C_{\mathcal{A}})) \\
	&\partial_t \Phi_{\mathcal{T}}(x,t) + (k_0+k_2) \Phi_{\mathcal{T}}  - k_1 \Phi_{\mathcal{A}} - \nabla \cdot(V_{\mathcal{T}} \Phi_{\mathcal{T}} - \nabla \cdot(D_{\mathcal{T}}\nabla \Phi_{\mathcal{T}})) \nonumber \\
	& \qquad = - \tilde k_2 C_{\mathcal{T}}  + \tilde k_1 C_{\mathcal{A}} + \nabla \cdot(\tilde V_{\mathcal{T}} C_{\mathcal{T}} + \nabla \cdot(\tilde D_{\mathcal{T}}\nabla C_{\mathcal{T}})) \\
	&\partial_t \Phi_{\mathcal{V}}(x,t) + (k_0+k_3) \Phi_{\mathcal{V}}  - k_2 \Phi_{\mathcal{T}} - \nabla \cdot(V_{\mathcal{V}} \Phi_{\mathcal{V}} - \nabla \cdot(D_{\mathcal{V}}\nabla \Phi_{\mathcal{V}})) \nonumber \\
	& \qquad = - \tilde k_3 C_{\mathcal{V}}  + \tilde k_2 C_{\mathcal{T}} + \nabla \cdot(\tilde V_{\mathcal{V}} C_{\mathcal{V}} + \nabla \cdot(\tilde D_{\mathcal{V}}\nabla C_{\mathcal{V}})) 
\end{align}
with homogeneous initial conditions and boundary conditions
\begin{align}
 &	(D_{\mathcal{A/T/V}} \nabla \Phi_{\mathcal{A/T/V}} + V_{\mathcal{A/T/V}} \Phi_{\mathcal{A/T/V}})\cdot \textit{n}  \nonumber \\
 & \qquad = - (\tilde D_{\mathcal{A/T/V}} \nabla C_{\mathcal{A/T/V}} + \tilde V_{\mathcal{A/T/V}} C_{\mathcal{A/T/V}})\cdot \textit{n} \qquad \text{in }  \Gamma \subset \partial \Omega \\
& (D_{\mathcal{A/T/V}} \nabla \Phi_{\mathcal{A/T/V}} + V_{\mathcal{A/T/V}} \Phi_{\mathcal{A/T/V}})\cdot \textit{n} \nonumber \\ & \qquad = (\Phi_{\mathcal{A/T/V}}V_{\mathcal{A/T/V}}^{out} - \tilde D_{\mathcal{A/T/V}} \nabla C_{\mathcal{A/T/V}} - \tilde V_{\mathcal{A/T/V}} C_{\mathcal{A/T/V}})\cdot \textit{n} \qquad \text{in } \partial \Omega / \Gamma.
\end{align}
\end{thm}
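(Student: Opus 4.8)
The plan is to proceed in the classical three steps for Fr\'echet differentiability of a parameter-to-solution map of a parabolic system that is linear in the state and bilinear in the pair (state, parameter): first, show the linearized system defining $(\Phi_{\mathcal A},\Phi_{\mathcal T},\Phi_{\mathcal V})$ is itself well posed and depends boundedly and linearly on the direction $\tilde p$; second, identify this solution as the candidate derivative; and third, estimate the Taylor remainder and show it is of second order in $\|\tilde p\|$. Throughout I regard the parameters as elements of the Banach space $X = L^2(\Omega)^3 \times L^\infty(\Omega)^6$ with the product norm $\|\tilde p\| = \sum_i \|\tilde k_i\|_{L^2} + \|\tilde V_{\mathcal A/\mathcal T/\mathcal V}\|_{L^\infty} + \|\tilde D_{\mathcal A/\mathcal T/\mathcal V}\|_{L^\infty}$. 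Since the only constraint in $\mathcal D_0$ that is relevant for well-posedness is the uniform ellipticity bound $D \ge D_0 > 0$, which is stable under small $L^\infty$ perturbations, the solution map $S$ extends to an open neighborhood of each $p \in \mathcal D_0$ in $X$ on which the theory behind Theorem \ref{existenceuniquenessthm} still applies, and differentiability is established on this neighborhood.

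First I would treat the linearized system. Its left-hand side is exactly the parabolic operator $\mathcal L$ evaluated at $p$ (same principal part, same reaction coupling through $k_1,k_2,k_3$), so the only new ingredient is the source. Because $\mathbf C = S(p) \in \mathcal W \subset L^2(0,T;H^1(\Omega))$ is known, each source term is controlled: the reaction contributions $\tilde k_i C$ lie in $L^2(0,T;H^{-1}(\Omega))$ via the embedding $H^1(\Omega)\hookrightarrow L^6(\Omega)$ in three dimensions (so that $L^2 \cdot L^6 \subset L^{3/2} \hookrightarrow H^{-1}$ on the bounded domain), while the transport and diffusion contributions $\nabla\cdot(\tilde V C)$ and $\nabla\cdot(\tilde D \nabla C)$ lie in $L^2(0,T;H^{-1}(\Omega))$ because $\tilde V,\tilde D \in L^\infty$ and $C,\nabla C \in L^2(0,T;L^2)$; the inhomogeneous flux boundary data are handled in $L^2(0,T;H^{-1/2}(\Gamma))$ analogously. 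The existence and uniqueness theory used for Theorem \ref{existenceuniquenessthm} then yields a unique $\Phi = (\Phi_{\mathcal A},\Phi_{\mathcal T},\Phi_{\mathcal V}) \in \mathcal W$, and since the source depends linearly on $\tilde p$ the resulting map $\tilde p \mapsto \Phi$ is linear and, by the a priori estimate, bounded, $\|\Phi\|_{\mathcal W} \le C \|\tilde p\|$. This $\Phi$ is the candidate for $S'(p)\tilde p$.

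The heart of the argument is the remainder estimate. Writing $\delta := S(p+\tilde p) - S(p)$ and $w := \delta - \Phi$, I subtract the governing equations at $p+\tilde p$, at $p$, and the linearized equation. The first-order source terms $\tilde p \cdot C$ cancel against the definition of $\Phi$, leaving a system in which $w$ solves the parabolic problem $\partial_t w = \mathcal L\, w + R$ at the parameter $p$, with homogeneous initial and boundary conditions, where the right-hand side $R$ is bilinear, of the schematic componentwise form $R = -\tilde k\,\delta + \nabla\cdot(\tilde V \delta) + \nabla\cdot(\tilde D \nabla \delta)$. By the same mapping properties as in the previous step, $\|R\|_{L^2(0,T;H^{-1})} \le C\|\tilde p\|\,\|\delta\|_{\mathcal W}$, and the a priori stability estimate for $\mathcal L$ at $p$ gives $\|w\|_{\mathcal W} \le C\|R\|_{L^2(0,T;H^{-1})}$. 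Invoking the local Lipschitz bound $\|\delta\|_{\mathcal W} \le L\|\tilde p\|$ from Proposition \ref{lipschitzthm}, I conclude $\|w\|_{\mathcal W} \le C' \|\tilde p\|^2 = o(\|\tilde p\|)$, which is precisely Fr\'echet differentiability with $S'(p)\tilde p = \Phi$.

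The main obstacle I anticipate is not the abstract scheme but the sharp bookkeeping of the bilinear source $R$ in the energy space: the products of the merely $L^2$-integrable reaction sensitivities $\tilde k_i$ with $\delta$ force the use of Sobolev embeddings, and thus tie the argument to the physical dimension $n=3$, while the transport and flux boundary terms must be shown to remain in $L^2(0,T;H^{-1})$ respectively $L^2(0,T;H^{-1/2}(\Gamma))$ without loss. One must also ensure that the constant in the parabolic a priori estimate can be chosen uniformly over a fixed neighborhood of $p$, so that the bound $\|w\|_{\mathcal W}\le C'\|\tilde p\|^2$ holds with a single $C'$ as $\tilde p \to 0$; this in turn rests on the uniform ellipticity $D \ge D_0$ and on absorbing the lower-order reaction and transport terms through a Gronwall argument in the energy estimate.
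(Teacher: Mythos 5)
Your proof is correct and takes essentially the same route as the paper's: the paper defers the ``more lengthy calculation'' to \cite{Reips}, but the scheme it sets up is precisely yours --- solve the linearized system by the same parabolic theory as Theorem \ref{existenceuniquenessthm} (with the sources $\tilde k_i C$, $\nabla\cdot(\tilde V C)$, $\nabla\cdot(\tilde D\nabla C)$ in $L^2(0,T;H^{-1}(\Omega))$ via the $H^1(\Omega)\hookrightarrow L^6(\Omega)$ embedding you invoke), then observe that, since ${\cal L}$ is affine in $p$, the remainder $w=S(p+\tilde p)-S(p)-\Phi$ solves the system at $p$ with the bilinear source $({\cal L}_{p+\tilde p}-{\cal L}_p)\delta$, and close with the uniform a priori estimate combined with the local Lipschitz bound of Proposition \ref{lipschitzthm} to obtain $\|w\|_{\cal W}=O(\|\tilde p\|^2)$. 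Your two supplementary precautions --- extending $S$ to an open neighborhood of $p$ in the parameter space (needed since the constraint $k\ge 0$ gives ${\cal D}_0$ empty interior in $L^2(\Omega)$) and keeping the parabolic stability constant uniform on that neighborhood via $D\ge D_0$ and Gronwall --- are exactly the bookkeeping the paper's sketch leaves implicit, not a deviation from it.
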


\section{The Inverse Problem and its Variational Regularization}

We now turn our attention to the inverse problem \eqref{eq:inverseproblem} respectively the noiseless version
\begin{equation} \label{eq:inversenonoise}
	Ku^*(t) = f^*(t), \qquad u^* = G(p^*). 
\end{equation}
The inversion can be considered as two subproblems, namely the reconstruction of an activity evolution $u$ from $f$, which is a standard image reconstruction problem at each time step $t$, and the solution of the parameter identification problem $G(p) = u$ for given $u$.

The regularization and numerical solution of the inverse problem is carried out for the full problem for the following rationale: in the case of bad data statistics stationary reconstructions at single time intervals yield results of inferior quality, and regularization at the level of the image $u$ thus yields a too strong bias, which is unnecessary since one can use a model for the time evolution instead. More natural regularizations at the level of the parameters can thus be incorporated, which better correspond to the available prior knowledge. 

The PET forward operator can be thought of as the x-ray transform (cf. \cite{Natterer}), but in real-life applications it becomes a more complicated operator including several corrections, e.g. for scattering and positron range (cf. \cite{Wernick}). Here we are not interested in these issues and the detailed structure of the operator $K$, but simply make the following assumption on $K $:
\begin{equation} \label{eq:Kcondition}
 K: L^2(\Omega) \rightarrow L^2(\Sigma) \text{ is a compact operator preserving positivity}.
\end{equation}

As mentioned above we use a Tikhonov-type regularization approach \eqref{variational0} 
with $u=G(p)$ to compute stable approximations of the solution of \eqref{eq:inversenonoise}. We will compute a minimizer of the form
\begin{equation} \label{eq:variational1}
 \hat p \in \arg \min_{p \in \mathcal{D}_p }  \mathcal{J}(p) = \int\limits_0^T \int_{\Sigma} K(G(p))(t) - f(t) log(K(G(p))(t)) dy~dt  + \alpha \mathcal{R}(p) 
\end{equation}
A subproblem we want to consider is the parameter identification problem
\begin{equation} \label{eq:variational2}
 \hat p \in \arg \min_{p \in \mathcal{D}_p }   \int\limits_0^T \int_{\Omega} \omega(x,t) (G(p)(x,t) - v(x,t))^2 dx~dt  + \alpha \mathcal{R}(p) 
\end{equation}
with given activity $v$ and a positive weight function $\omega$, which will also appear as a subproblem in our numerical approach based on operator splitting as we shall see below. 

It remains to specify the typical regularization functionals we want to employ in the reconstruction, which we shall discuss in the following and then proceed to the analysis of \eqref{eq:variational1} respectively \eqref{eq:variational2}.

\subsection{Choice of Regularization Functionals}

The main a-priori informations that can be used for choosing regularization functionals are 
\begin{itemize}

\item Typical values for some of the parameters, e.g. diffusion coefficients. 

\item Spatial smoothness of the parameters at least inside organs. 

\end{itemize}
For this sake it seems natural to construct a regularization functional as follows:
\begin{equation}
	 \mathcal{R}(p) = \int_\Omega (D(p;p^*) + E(\nabla p)) ~dx,
\end{equation}
where $D$ measures a distance from prior values $p^*$ and $E$ is a convex energy penalizing large values of the gradient of $p$. In the simplest case one can use quadratic functionals, i.e. 
\begin{equation} \label{regularization}
	 \mathcal{R}(p) = \frac{1}2 \sum_i \int_\Omega ( \beta_i (p_i - p_i^*)^2 + \gamma_i|\nabla p|^2) ~dx,
\end{equation}
with weights $\beta_i$ and $\gamma_i$, which we shall also use for our numerical tests below. 

We make the assumption that 
\begin{equation}
	\{\mathcal{R}(p) \leq C \} \cap \mathcal{D}_p \text{ is bounded in } W^{1,r}(\Omega)^{15}
\end{equation}
for some $r > \frac{6}5$, where the effective domain is given by
\begin{equation}
	\mathcal{D}_p=	\mathcal{D}_0 \cap \{ 0 < d_{min} \leq D_{\mathcal{A}/\mathcal{T}/\mathcal{V}} \leq d_{max}, |V_{\mathcal{A}/\mathcal{T}/\mathcal{V}}| \leq v_{max} \}.
\end{equation}
Note that under the above condition we also have compactness of the level sets in $L^2(\Omega)^{15}$.
Moreover, we assume that $p \mapsto \int_\Omega D(p;p^*)~dx$ is weakly lower semicontinuous with respect to weak convergence in $W^{1,r}(\Omega)^{15}$, which is true e.g. if $D$ is convex.

\subsection{Analysis of the Variational Regularization}

In the following we provide a brief analysis of the variational regularization method. Most arguments are in line with the analysis of nonlinear operator equations in Banach spaces in \cite{Schusterbuch}, but due to the Poisson data fidelity, which is not the power of a norm, they are not covered by those. 

\begin{lemma}
Under the above conditions the operator $G$ is sequentially closed on $\mathcal{D}_p$  from the weak topology of $W^{1,r}(\Omega)^{15}$ to strong convergence in $L^2(0,T;L^2(\Omega))$.
\end{lemma}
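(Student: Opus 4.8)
The plan is to establish the closedness directly from the weak formulation of the parabolic system, exploiting two compactness mechanisms. Suppose $p_n \rightharpoonup p$ in $W^{1,r}(\Omega)^{15}$ with all $p_n \in \mathcal{D}_p$, and $G(p_n) \to u$ strongly in $L^2(0,T;L^2(\Omega))$; I want to conclude $p \in \mathcal{D}_p$ and $G(p) = u$. First I would record the two key convergences. Since $r > \frac65$, in three dimensions the Sobolev exponent satisfies $r^* = \frac{3r}{3-r} > 2$, so the embedding $W^{1,r}(\Omega) \hookrightarrow L^2(\Omega)$ is compact (Rellich--Kondrachov); hence the weak convergence of $p_n$ upgrades to strong convergence $p_n \to p$ in $L^2(\Omega)^{15}$ (and in $L^q$ for any $q < r^*$). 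This is precisely where the hypothesis $r>\frac65$ is used. The box constraints defining $\mathcal{D}_p$ (namely $d_{min} \le D \le d_{max}$, $|V| \le v_{max}$, $k \ge 0$, $D \ge D_0$) pass to the a.e.\ limit, so $p \in \mathcal{D}_p$ and $G(p)$ is well defined.

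Next I would set ${\bf C}_n = S(p_n)$ and derive uniform energy bounds. Because the parameters in $\mathcal{D}_p$ are uniformly controlled (diffusion bounded below by $D_0$ and above by $d_{max}$, velocities bounded by $v_{max}$, reaction rates nonnegative and bounded in $L^{r^*}\subset L^2$), the standard parabolic a priori estimate bounds ${\bf C}_n$ uniformly in $\mathcal{W} = L^2(0,T;H^1(\Omega)) \cap H^1(0,T;H^{-1}(\Omega))$. Passing to a subsequence, ${\bf C}_n \rightharpoonup {\bf C}$ in $L^2(0,T;H^1(\Omega))^3$ with $\partial_t {\bf C}_n \rightharpoonup \partial_t {\bf C}$ in $L^2(0,T;H^{-1}(\Omega))^3$, and by the Aubin--Lions lemma ${\bf C}_n \to {\bf C}$ strongly in $L^2(0,T;L^2(\Omega))^3$. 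Weak continuity of the trace at $t=0$ together with the fixed initial data yields that ${\bf C}$ satisfies the initial conditions, while weak convergence of the spatial traces handles the $V^{out}$ boundary term.

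The heart of the argument is passing to the limit in the weak formulation, tested against $\varphi \in C^\infty([0,T]\times\overline\Omega)$, for which $\nabla\varphi \in L^\infty$. The bilinear terms are the delicate point, because the parameters converge only strongly in $L^2$ (not in $L^\infty$) while the states converge only weakly in $L^2(0,T;H^1(\Omega))$. For the diffusion term $\int D_{n}\nabla C_{n}\cdot\nabla\varphi$ I would write $D_{n}\nabla\varphi \to D\nabla\varphi$ strongly in $L^2$ (from $D_n \to D$ in $L^2$, the uniform $L^\infty$ bound $d_{max}$, and dominated convergence, using smoothness of $\varphi$) and pair it with the weak limit $\nabla C_{n}\rightharpoonup\nabla C$. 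For the transport term $\int V_n C_n\cdot\nabla\varphi$ I would use strong $L^2$ convergence of $C_n$ together with $V_n\nabla\varphi\to V\nabla\varphi$ strongly. For the reaction terms $\int k_{i,n}C_n\varphi$ I would split $k_{i,n}C_n - k_iC = (k_{i,n}-k_i)C_n + k_i(C_n-C)$, controlling the first summand by $\|k_{i,n}-k_i\|_{L^2}\,\|C_n\varphi\|_{L^2(0,T;L^2)}$ (bounded via the embedding $H^1\hookrightarrow L^6$ of the states) and the second by strong $L^2$ convergence of $C_n$. Collecting these limits shows ${\bf C}$ is the weak solution for the parameter $p$; by the uniqueness in Theorem \ref{existenceuniquenessthm}, ${\bf C} = S(p)$, so $G(p_{n_k}) = (1,1,1)\cdot{\bf C}_{n_k} \to (1,1,1)\cdot{\bf C} = G(p)$ strongly. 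Since the full sequence $G(p_n)$ was assumed to converge to $u$, necessarily $u = G(p)$.

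I expect the main obstacle to be exactly the mismatch in convergence modes in the bilinear terms: securing strong $L^2$ convergence of the parameters (which is what $r > \frac65$ buys through the compact embedding) and combining it correctly with the weak convergence of $\nabla C_n$ and the strong convergence of $C_n$ (via Aubin--Lions), so that every product, including the boundary contributions, passes to the limit consistently.
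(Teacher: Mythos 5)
Your argument is correct and matches the paper's own proof in all essentials: the compact embedding $W^{1,r}(\Omega)\hookrightarrow L^2(\Omega)$ for $r>\frac{6}{5}$ to upgrade parameter convergence, uniform energy bounds plus Aubin--Lions for strong $L^2(0,T;L^2(\Omega))$ convergence of the states, term-by-term limit passage in the weak formulation pairing strong parameter convergence with weak convergence of $\nabla C_n$ (and strong convergence of $C_n$ in the transport and reaction terms), and uniqueness of weak solutions to identify the limit. The only cosmetic difference is that you test with smooth space-time functions while the paper tests with $\varphi\in C^1(\Omega)$ and then applies a closure argument to $H^1(\Omega)$; this does not change the substance.
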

\begin{proof}
Let $p^m$ be a weakly convergent sequence with limit $p$. Then we see from Theorem \ref{existencethm} and Proposition \ref{lipschitzthm} that there exists a unique solution for any $p^m \in \mathcal{D}_p$ and the activity ${\bf C}^m=S(p^m)$ is uniformly bounded in ${\cal W}$. Hence, there exists a weakly convergent subsequence, which converges strongly in $L^2(0,T;L^2(\Omega))^3$ by the Aubin-Lions lemma (cf. \cite{showalter}). We now verify that the limit of every such subsequence is $S(p)$, which implies the convergence of the original sequence ${\bf C}^m$ due to uniqueness of the limit of subsequences. 

To do so, we have to show that weak solutions of \eqref{partialCA}-\eqref{initialconditons} with parameter $p^m$ converge to weak solutions with parameter $p$, the uniqueness result in Theorem \ref{existencethm} then finishes the argument. For brevity we carry out the analysis only for \eqref{partialCA}, whose weak formulation is given by
 
\begin{equation*}
\begin{split}
\langle \partial_t C_\mathcal{A}^m, \varphi \rangle & + \int_\Omega ( (k_0+k_1^m) C_{\mathcal{A}}^m \varphi - k_3^m C_{\mathcal{V}}^m \varphi +  V_{\mathcal{A}}^m C_{\mathcal{A}}^m  \cdot \nabla \varphi + D_{\mathcal{A}}^m \nabla C_{\mathcal{A}}^m \cdot \nabla \varphi ) ~dx\\
& = \int_\Gamma j_{\mathcal{A}}^{in} \varphi~d\sigma + 
\int_{\partial \Omega \setminus \Gamma} C_\mathcal{A}^m V_{\mathcal{A}}^{out} \varphi~d\sigma
\end{split}
\end{equation*}
for $\varphi \in H^1(\Omega)$. We first carry out the limit for $\varphi \in C^1(\Omega)$. We obtain because of weak convergence of $C_\mathcal{A}^m$ in $H^1(0,T;H^{-1}(\Omega))$ that
$$ \langle \partial_t C_\mathcal{A}^m, \varphi \rangle \rightarrow \langle \partial_t C_\mathcal{A}, \varphi \rangle. $$ 
Moreover, the compact embedding into $L^2$ yields strong $L^1$-convergence
\begin{align*}
	(k_0+k_1^m) C_{\mathcal{A}}^m  & \rightarrow (k_0+k_1) C_{\mathcal{A}} \\
	k_3^m C_{\mathcal{V}}^m  & \rightarrow k_3 C_{\mathcal{V}} \\
	V_{\mathcal{A}}^m C_{\mathcal{A}}^m  & \rightarrow V_{\mathcal{A}} C_{\mathcal{A}} 
\end{align*}
and the strong $L^2$-convergence of $D_{\mathcal{A}}^m$ together with the weak $L^2$-convergence of $\nabla C_{\mathcal{A}}^m$ yields weak $L^1$-convergence
$$  D_{\mathcal{A}}^m \nabla C_{\mathcal{A}}^m  \rightharpoonup D_{\mathcal{A}} \nabla C_{\mathcal{A}} . $$ 
Thus, we may pass to the limit in all terms on the left-hand side of the weak formulation. Since the right-hand side is a continuous affinely linear functional of $C_{\mathcal{A}}^m $ we can immediately pass to the limit. Thus, we conclude
\begin{equation*}
\begin{split}
\langle \partial_t C_\mathcal{A}, \varphi \rangle & + \int_\Omega ( (k_0+k_1) C_{\mathcal{A}} \varphi - k_3 C_{\mathcal{V}} \varphi +  V_{\mathcal{A}} C_{\mathcal{A}}  \cdot \nabla \varphi + D_{\mathcal{A}} \nabla C_{\mathcal{A}} \cdot \nabla \varphi ) ~dx \\
& = \int_\Gamma j_{\mathcal{A}}^{in} \varphi~d\sigma + 
\int_{\partial \Omega \setminus \Gamma} C_\mathcal{A} V_{\mathcal{A}}^{out} \varphi~d\sigma
\end{split}
\end{equation*}
for all $\varphi \in C^1(\Omega)$. As the final step, we use that $\mathcal{D}_p$ is closed under the above convergence of $p^m$. For $p \in \mathcal{D}_p$ it is straight-forward to see that both the left- and right-hand side can be extended to continuous linear functionals of $\varphi$ on $H^1(\Omega)$, which allows to apply a closure argument and to see that indeed ${\bf C}$ is the unique weak solution of \eqref{partialCA}-\eqref{initialconditons}.
\end{proof}

The weak closedness of the forward operator is the main ingredient to establish the lower semicontinuity of the functional $\mathcal{J}$. The remaining steps are immediate, since both the data term is convex as a function of $u$ and the regularization functional $\mathcal{R}$ is weakly lower semicontinuous due to the above assumptions:
\begin{lemma}
Under the above conditions the functional $\mathcal{J}$ is sequentially lower semicontinuous on $\mathcal{D}_p$ with respect to the weak topology in $W^{1,p}(\Omega)^{15}$.
\end{lemma}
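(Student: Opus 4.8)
The plan is to exploit the weak closedness of $G$ established in the preceding lemma, thereby reducing the lower semicontinuity of $\mathcal{J}$ to a lower semicontinuity statement for the Poisson data fidelity under strong $L^2$-convergence, and then to add the already-assumed weak lower semicontinuity of $\mathcal{R}$. Concretely, I would fix a sequence $(p^m) \subset \mathcal{D}_p$ with $p^m \rightharpoonup p$ weakly in $W^{1,r}(\Omega)^{15}$ (the exponent in the statement should read $r$, not $p$) and aim to show $\mathcal{J}(p) \le \liminf_m \mathcal{J}(p^m)$. Passing first to a subsequence along which the $\liminf$ is realised, I am then free to extract further subsequences without loss of generality.

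The first step is to transport the convergence through $K \circ G$. By the preceding lemma, $u^m := G(p^m) \to u := G(p)$ strongly in $L^2(0,T;L^2(\Omega))$, and since $K$ is a bounded (indeed compact) operator acting in the spatial variable, $z^m := K(G(p^m)) \to z := K(G(p))$ strongly in $L^2(0,T;L^2(\Sigma))$. Nonnegativity of $u^m$ (Theorem~\ref{existencethm}) together with the positivity preservation in \eqref{eq:Kcondition} gives $z^m \ge 0$ and $z \ge 0$. Extracting once more, I may assume $z^m \to z$ almost everywhere on $(0,T) \times \Sigma$.

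The core of the argument is the lower semicontinuity of the data term $\mathcal{F}(z) = \int_0^T \int_\Sigma (z - f \log z)\,dy\,dt$. For fixed $f \ge 0$ the integrand $\phi_f(s) = s - f \log s$ is convex on $s > 0$ with minimiser $s = f$, hence bounded below by the function $c(f) := f - f \log f$ (with $c(0)=0$), which is integrable under the standing assumptions on $f$ (e.g. $f \in L^\infty$). I would then verify the pointwise inequality $\liminf_m \phi_f(z^m) \ge \phi_f(z)$ almost everywhere: on $\{z > 0\}$ it is just continuity of $\phi_f$; on $\{z = 0\} \cap \{f > 0\}$ both sides equal $+\infty$; and on $\{f = 0\}$ the integrand reduces to $z^m \to z$. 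Applying Fatou's lemma to the nonnegative sequence $\phi_f(z^m) - c(f)$ then yields $\mathcal{F}(z) \le \liminf_m \mathcal{F}(z^m)$.

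It remains to add the regularization term: $\mathcal{R}$ is weakly lower semicontinuous with respect to weak convergence in $W^{1,r}(\Omega)^{15}$, the gradient contribution $\int_\Omega E(\nabla p)\,dx$ by convexity of $E$ and the distance contribution $\int_\Omega D(p;p^*)\,dx$ by the assumption made in the previous subsection. Combining the two lower semicontinuity statements via superadditivity of $\liminf$ gives $\mathcal{J}(p) \le \liminf_m \mathcal{J}(p^m)$ along the chosen subsequence, and hence for the original sequence. The main obstacle is precisely the data fidelity: since it is not the power of a norm, the standard Banach-space lower semicontinuity results do not apply directly, and the delicate points are securing a uniform integrable lower bound $c(f)$ that makes Fatou's lemma applicable and correctly treating the degenerate set $\{z = 0\}$ in its interplay with the zero set of $f$.
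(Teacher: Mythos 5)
Your proposal is correct and takes essentially the same route as the paper, which only sketches the argument: weak closedness of $G$ (the preceding lemma) yields strong $L^2$-convergence of $K(G(p^m))$, the Poisson data term is handled through the convex integrand $s \mapsto s - f\log s$ (your Fatou argument with the integrable lower bound $f - f\log f$ is precisely the rigorous version of the paper's remark that ``the data term is convex as a function of $u$''), and the weak lower semicontinuity of $\mathcal{R}$ is taken from the standing assumptions. Your two side remarks --- that the exponent in the statement should read $r$ rather than $p$, and that one needs an integrability condition such as $f\log f \in L^1$ (satisfied e.g. for $f \in L^\infty$) for the lower bound to be usable --- are both accurate refinements of what the paper leaves implicit.
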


Now we have established weak lower semicontinuity and boundedness of level sets of $\mathcal{J}$. With the Banach-Alaoglu theorem (cf. \cite{rudin}) the latter implies the following existence result: 
\begin{thm}
Let the above assumptions on $\mathcal{R}$ and $K$ hold, and let $\alpha >0$. If there exists $p \in \mathcal{D}_p$ such that $\mathcal{J}(p)<\infty$, then there exists a solution of \eqref{eq:variational2}.
\end{thm}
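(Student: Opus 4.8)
The plan is to apply the direct method of the calculus of variations to the objective in \eqref{eq:variational2}, which I denote
\[
\mathcal{J}(p) = \int_0^T\!\!\int_\Omega \omega(x,t)\,(G(p)(x,t)-v(x,t))^2\,dx\,dt + \alpha\,\mathcal{R}(p).
\]
Both summands are nonnegative, so $\mathcal{J} \geq 0$; by hypothesis there is a $p$ with $\mathcal{J}(p)<\infty$, hence the infimum $m:=\inf_{p\in\mathcal{D}_p}\mathcal{J}(p)$ is finite. First I would pick a minimizing sequence $(p^m)\subset\mathcal{D}_p$ with $\mathcal{J}(p^m)\to m$ and, after discarding finitely many terms, assume $\mathcal{J}(p^m)\leq m+1$. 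Since the data term is nonnegative, this forces $\alpha\,\mathcal{R}(p^m)\leq m+1$, so the whole sequence lies in a sublevel set $\{\mathcal{R}\leq C\}\cap\mathcal{D}_p$.

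By the standing assumption this sublevel set is bounded in $W^{1,r}(\Omega)^{15}$ with $r>\tfrac{6}5$. As $W^{1,r}$ is reflexive for $1<r<\infty$, the Banach--Alaoglu theorem yields a subsequence (not relabelled) with $p^m\rightharpoonup\hat p$ weakly in $W^{1,r}(\Omega)^{15}$. Because $r>\tfrac{6}5$ the embedding $W^{1,r}(\Omega)\hookrightarrow L^2(\Omega)$ is compact in three dimensions, so along a further subsequence $p^m\to\hat p$ strongly in $L^2(\Omega)^{15}$ and pointwise almost everywhere. The constraints defining $\mathcal{D}_p$ — the box bounds $d_{min}\leq D_{\mathcal{A}/\mathcal{T}/\mathcal{V}}\leq d_{max}$, $|V_{\mathcal{A}/\mathcal{T}/\mathcal{V}}|\leq v_{max}$, and $k\geq 0$ — are all preserved under almost-everywhere limits, so $\hat p\in\mathcal{D}_p$ and the limit is feasible.

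It then remains to pass to the limit in $\mathcal{J}$. For the regularization term I would invoke weak lower semicontinuity of $\mathcal{R}$ in $W^{1,r}$, which holds by the standing convexity and lower-semicontinuity assumptions on $D$ and $E$; thus $\mathcal{R}(\hat p)\leq\liminf_m\mathcal{R}(p^m)$. The nonlinear data term is where the PDE structure enters: the weak closedness lemma established above shows that weak convergence $p^m\rightharpoonup\hat p$ in $W^{1,r}$ implies \emph{strong} convergence $G(p^m)\to G(\hat p)$ in $L^2(0,T;L^2(\Omega))$. Consequently $G(p^m)-v\to G(\hat p)-v$ strongly in $L^2$, and since $\omega\in L^\infty$ is bounded the weighted quadratic functional $w\mapsto\int_0^T\!\int_\Omega\omega\,(w-v)^2$ is continuous along this strong convergence; hence the data term does not merely satisfy lower semicontinuity but actually converges. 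Adding the two estimates gives $\mathcal{J}(\hat p)\leq\liminf_m\mathcal{J}(p^m)=m$, and since $\hat p\in\mathcal{D}_p$ we also have $\mathcal{J}(\hat p)\geq m$; therefore $\hat p$ is a minimizer of \eqref{eq:variational2}.

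The only genuinely nonlinear ingredient — and the step I would flag as the crux — is the weak-to-strong closedness of $G$, which is exactly the lemma proved earlier through the uniform $\mathcal{W}$-bound and the Aubin--Lions compactness argument; everything else is the routine compactness-plus-lower-semicontinuity skeleton of the direct method. It is worth noting that, compared with the Poisson fidelity in \eqref{eq:variational1}, the present $L^2$ fidelity is in fact continuous rather than only lower semicontinuous under the relevant convergence, so this subproblem is the easier of the two.
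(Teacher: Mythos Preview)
Your overall strategy---the direct method via a minimizing sequence, sublevel-set compactness from $\mathcal{R}$, Banach--Alaoglu, and lower semicontinuity built on the weak closedness lemma for $G$---is precisely the paper's route: the sentence immediately preceding the theorem says exactly that weak lower semicontinuity of $\mathcal{J}$ plus boundedness of its sublevel sets plus Banach--Alaoglu give existence.

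The gap is that you have treated the wrong functional. The label \eqref{eq:variational2} in the conclusion is a typo; the hypotheses (the assumption on $K$, and the condition $\mathcal{J}(p)<\infty$ with $\mathcal{J}$ defined in \eqref{eq:variational1} via the Poisson integrand $KG(p)-f\log KG(p)$) make clear that this theorem concerns \eqref{eq:variational1}. The weighted $L^2$ problem \eqref{eq:variational2} is handled by the \emph{next} theorem, which carries its own hypotheses on $v$ and $\omega$. Two of your steps are specific to the $L^2$ fidelity and do not survive the switch. First, the Poisson data term is not nonnegative, so you cannot deduce $\alpha\mathcal{R}(p^m)\le m+1$ from $\mathcal{J}(p^m)\le m+1$; you need instead that $z\mapsto z-f\log z$ is bounded below (by $f-f\log f$) to obtain a uniform lower bound on the data term and hence the sublevel bound on $\mathcal{R}$. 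Second, the limit passage in the data term is no longer a simple strong-$L^2$ continuity statement: from strong convergence $G(p^m)\to G(\hat p)$ in $L^2$ and the boundedness of $K$ you get $KG(p^m)\to KG(\hat p)$ in $L^2(\Sigma)$, and then convexity of the integrand (Fatou for the $-f\log(\cdot)$ part) yields lower semicontinuity---this is exactly what the preceding lower-semicontinuity lemma for $\mathcal{J}$ provides. With these two adjustments your argument proves the intended statement about \eqref{eq:variational1}.
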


An analogous result can be obtained for the parameter identification subproblem, which will be useful to justify our numerical approach below:
\begin{thm}
Let the above assumptions on $\mathcal{R}$ hold, let $v \in L^2([0,T]\times \Omega)$, $\omega \in L^\infty([0,T]\times \Omega)$, and $\alpha >0$. Then there exists a solution of \eqref{eq:variational2}.
\end{thm}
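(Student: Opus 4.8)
The plan is to run the direct method of the calculus of variations, using the weak closedness Lemma above to supply the one genuinely nonlinear ingredient. Denote by $\mathcal{F}(p)$ the objective in \eqref{eq:variational2}. First I would record that $\mathcal{F}$ is proper and bounded below: since $\omega \geq 0$ and $\mathcal{R} \geq 0$ we have $\mathcal{F} \geq 0$, and for any $p \in \mathcal{D}_p$ Theorem \ref{existencethm} gives $G(p) \in \mathcal{W} \subset L^2(0,T;L^2(\Omega))$, so together with $\omega \in L^\infty$ and $v \in L^2$ the data term is finite; choosing admissible (e.g.\ constant) parameters with $\mathcal{R}(p) < \infty$ shows $\inf_{\mathcal{D}_p} \mathcal{F} < \infty$.

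Next I would take a minimizing sequence $p^m \in \mathcal{D}_p$. Because the data term is nonnegative, $\alpha \mathcal{R}(p^m)$ is bounded, so by the standing level-set assumption the $p^m$ lie in a bounded subset of $W^{1,r}(\Omega)^{15}$ with $r > \tfrac65$. As $W^{1,r}$ is reflexive for $r \in (1,\infty)$, the Banach--Alaoglu theorem yields a subsequence (not relabelled) with $p^m \rightharpoonup \hat p$ weakly in $W^{1,r}(\Omega)^{15}$; by the compact embedding $W^{1,r}(\Omega) \hookrightarrow\hookrightarrow L^2(\Omega)$ (valid precisely because $r > \tfrac65$) one also has $p^m \to \hat p$ strongly in $L^2(\Omega)^{15}$ and, along a further subsequence, a.e.\ in $\Omega$. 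This a.e.\ convergence preserves the pointwise constraints defining $\mathcal{D}_p$ (the sign condition $k \geq 0$, the two-sided bounds $d_{min} \leq D_{\mathcal{A}/\mathcal{T}/\mathcal{V}} \leq d_{max}$ and $|V_{\mathcal{A}/\mathcal{T}/\mathcal{V}}| \leq v_{max}$), so $\hat p \in \mathcal{D}_p$ is admissible.

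It remains to pass to the limit in the two terms. By the weak closedness Lemma, the weak convergence $p^m \rightharpoonup \hat p$ in $W^{1,r}(\Omega)^{15}$ forces strong convergence $G(p^m) \to G(\hat p)$ in $L^2(0,T;L^2(\Omega))$. Since $\omega \in L^\infty$, the map $u \mapsto \int_0^T\!\int_\Omega \omega (u-v)^2\,dx\,dt$ is continuous with respect to strong $L^2$-convergence, so the data term converges. For the regularization, the gradient part $\int_\Omega E(\nabla p)\,dx$ is weakly lower semicontinuous on $W^{1,r}$ by convexity of $E$ (the standard semicontinuity result for convex integrands), and $\int_\Omega D(p;p^*)\,dx$ is weakly lower semicontinuous by assumption, whence $\mathcal{R}$ is weakly lower semicontinuous. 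Combining the two,
\begin{equation*}
\mathcal{F}(\hat p) \leq \liminf_{m \to \infty} \mathcal{F}(p^m) = \inf_{p \in \mathcal{D}_p} \mathcal{F}(p),
\end{equation*}
so $\hat p$ is a minimizer.

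I expect the only delicate point to be the passage to the limit in the \emph{nonlinear} data fidelity: mere weak convergence of $G(p^m)$ would not suffice to handle the quadratic dependence, and it is exactly the strong $L^2$-convergence delivered by the weak closedness Lemma that resolves this. By contrast with the full problem \eqref{eq:variational1}, the weighted $L^2$ fidelity here is even continuous (not just lower semicontinuous) under that strong convergence, so no compactness of a forward operator $K$ is needed; the remaining steps—coercivity from the level-set assumption, weak closedness of $\mathcal{D}_p$, and convexity-based lower semicontinuity of $\mathcal{R}$—are routine.
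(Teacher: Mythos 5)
Your proof is correct and follows essentially the same route as the paper: weak compactness of minimizing sequences in $W^{1,r}(\Omega)^{15}$ via the level-set assumption and Banach--Alaoglu, closedness of $\mathcal{D}_p$ under the resulting convergence, the weak closedness lemma supplying strong $L^2(0,T;L^2(\Omega))$ convergence of $G(p^m)$ (which makes the weighted quadratic fidelity continuous), and weak lower semicontinuity of $\mathcal{R}$ from convexity and the standing assumptions. Your side remark that, unlike for the Poisson fidelity \eqref{eq:variational1}, no finiteness hypothesis $\mathcal{J}(p)<\infty$ is needed here is also consistent with the paper's formulation of the two theorems.
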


The above weak compactness and lower semicontinuity are the key ingredients to verify stability of the regularization with respect to perturbations of $f$ at fixed $\alpha$, respectively convergence of the regularization as $f \rightarrow Ku^*$ and $\alpha \rightarrow 0$ with a suitable condition between $\alpha$ and the noise level with techniques developed in \cite{SawatzkyThesis}. We omit the quite technical arguments here and refer to \cite{Reips} for further details. 


\section{Numerical Solution}

In the following we discuss the numerical solution of the inverse problem \eqref{eq:inverseproblem} res-\ pectively the regularized version \eqref{eq:variational1}.
We will first discuss the overall minimization and then proceed to a more detailed discussion of the parameter identification problem and its discretization.


\subsection{Minimization Methods}

A basic paradigm for the construction of minimization methods is to avoid solutions of systems with the complicated operator $K$, whose discretization can correspond to a non-sparse matrix that one would like to avoid. Moreover, we try to avoid the solution of complicated systems for the activity distribution in space-time, but rather try to find a splitting such that stationary image reconstruction steps can be computed. For this reason we employ an operator splitting similar to previous investigations in PET and related problems (cf. \cite{emtv,BSB09a}), which can use the well-known EM-iteration as a first part, i.e. with $u_k = G(p_k)$
\begin{equation}
u_{k + \frac{1}{2}}(t) = \dfrac{u_k(t)}{K^{*}1}K^{*} \left(\dfrac{f(t)}{Ku_k(t)} \right)
\end{equation}
In the second half-step we use a backward splitting step of the form. 
\begin{equation} 
 p_{k+1} \in \arg \min_{p \in \mathcal{D}_p }   \int\limits_0^T \int_{\Omega} \omega_k(x,t) (G(p)(x,t) - u_{k + \frac{1}{2}}(x,t))^2 dx~dt  + \alpha \mathcal{R}(p) 
\end{equation}
with $\omega_k = \frac{K^{*}1}{G(p^k)}$.
The optimality condition for the latter is
\begin{equation}
	G'(p_{k+1})^*\left( \frac{G(p_{k+1}) - u_{k + \frac{1}{2}}}{G(p_k)}\right) + \alpha   \mathcal{R}'(p^{k+1}) = 0. 
\end{equation}
Inserting the first half-step yields
\begin{equation}
	G'(p_{k+1})^*\left( K^{*}1 \frac{G(p_{k+1})}{G(p_k)} - K^{*} \left(\frac{f}{KG(p_k)}   \right) \right)+ \alpha   \mathcal{R}'(p^{k+1}) = 0,
\end{equation}
which confirms the consistency with the optimality conditions. 

The major advantage of the above minimization approach is that available efficient methods can be used for the first half-step, and a more standard parameter identification problem of the form \eqref{eq:variational2} is to be solved in the second half-step, which can be performed by standard gradient-type methods, e.g. steepest descent or quasi-Newton methods. 
In our case we use a variant of a projected gradient method for the regularization functional \eqref{regularization}. The derivative of
\begin{equation}
	\mathcal{J}_k(p) =   \int\limits_0^T \int_{\Omega} \omega_k(x,t) (G(p)(x,t) - u_{k + \frac{1}{2}}(x,t))^2 dx~dt  + \alpha \mathcal{R}(p) 
\end{equation}
is given by
\begin{equation}
	\mathcal{J}_k'(p) =   G'(p)^*( \omega_k (G(p) - u_{k + \frac{1}{2}}))  + \alpha( \beta(p-p^*) - \gamma \Delta p),
\end{equation}
where we use a vectorial notation $\beta p = (\beta_i p_i)$. In particular the last term involving the Laplacian of $p$ would necessitate high damping and hence slow convergence in a standard gradient method we therefore use forward-backward splitting approach again, computing in a subiteration first a solution of (with damping parameter $\eta$)
\begin{equation}
	(\eta + \alpha \beta) q^{n+1} - \Delta q^{n+1} = \eta p^n - G'(p^n)^*( \omega_k (G(p^n) - u_{k + \frac{1}{2}})) + \alpha \beta p^*,
\end{equation}
followed by a projection step $p^{n+1} = \mathcal{P}_{\mathcal{D}_p}(q^{n+1})$.

To compute the derivative we first need to solve the forward model for given $p^n$ yielding $G(p^n)$. To subsequently compute $G'(p^n)^*( \omega_k (G(p^n) - u_{k + \frac{1}{2}}))$ we use an adjoint method (cf. \cite{Hinze01}). For detailed computations we refer to \cite{Reips}, the adjoint equations are stated in the appendix.
In order to numerically implement the iteration we need appropriate discretizations of the PDE models, which we discuss in the following.

\subsection{Discretization of the Differential Equations}\label{DiscretizacaodasEquacoesDif}

In the following we discuss the discretization of the model, for the sake of simplicity restricting ourselves to the spatially two-dimensional case, which is later also used in our computational tests. The extension to three dimensions is straight-forward.
For the time discretization of the system \eqref{partialCA}-\eqref{partialCV} we use an operator splitting approach common for reaction-diffusion systems. In addition, we use alternating directions implicit (ADI) splitting for the spatial derivatives. To provide a detailed discussion, we write the system as
\begin{equation}\label{sistemacactcv}
\partial_t C  =  \nabla ((V(x) C) + (D(x)\nabla C)) +\left(
  \begin{array}{ccc}
    -(k_0 + k_1) & k_3 & 0 \\
    0 & -(k_0 + k_3) & k_2 \\
    k_1 & 0 & -(k_0 +k_2) \\
  \end{array}
\right) C,
\end{equation}
with $$C = \left(
  \begin{array}{ccc}
    C_{\mathcal{A}} \\
    C_{\mathcal{V}} \\
    C_{\mathcal{T}} \\
  \end{array}
\right), \qquad D = \left(
  \begin{array}{ccc}
    D_{\mathcal{A}} \\
    D_{\mathcal{V}} \\
    D_{\mathcal{T}} \\
  \end{array}
\right), \qquad V = \left(
  \begin{array}{ccc}
    V_{\mathcal{A}} \\
    V_{\mathcal{V}} \\
    V_{\mathcal{T}} \\
  \end{array}
\right). $$

We discretize the system on a time grid $t_k=k\tau$ for a time step $\tau > 0$ and use the notation $C^{\tau}$ for the time discrete solution. With an ADI splitting of spatial derivatives and reaction terms we obtain
\begin{align} \label{i}
 \dfrac{C^{\tau} \left(t_k+\frac{\tau}{3} \right) - C^{\tau}(t_k)}{\tau} & = \partial_{x_1} \left(D \partial_{x_1} C^{\tau} \left(t_k+\frac{\tau}{3} \right)+ V_1 C^{\tau} \left(t_k+\frac{\tau}{3} \right) \right) \\
 \label{ii}
 \dfrac{C^{\tau} \left(t_k+\frac{2\tau}{3} \right) - C^{\tau}(t_k+\frac{\tau}{3})}{\tau} & = \partial_{x_2} \left(D \partial_{x_2} C^{\tau} \left(t_k+\frac{2\tau}{3} \right)+ V_2 C^{\tau} \left(t_k+\frac{2\tau}{3} \right) \right) \\ \label{iii}
\dfrac{C^{\tau} \left(t_{k+1}\right)-C^{\tau} \left(t_k+\frac{2\tau}{3} \right) }{\tau} & =
 \left( 
  \begin{array}{ccc}      
  -(k_0 + k_1)& k_3 & 0 \\         
   0 & -(k_0 +k_3) & k_2  \\        
   k_1 & 0 & -(k_0 +k_2) \\            
  \end{array}
\right)  C^{\tau}(t_{k+1}) 
\end{align}

In space we use a finite difference (or equivalently finite volume) method with stabilization to obtain robustness for the case of dominant convection we are interested in. The equations (\ref{i}) and (\ref{ii}) can be discretized with the Scharfetter-Gummel scheme (cf.{\cite{scharfetter}}), which is a variant of upwind schemes that has the advantage that all coefficients remain differentiable with respect to the parameters. For further details we refer to \cite{Reips}. Equation \eqref{iii} can be directly solved in each grid point.

The adjoint equations (see Appendix), which are a system of transport-reaction-diffusion equations as well, are discretization in an analogous manner, such that they are finally the discrete adjoint of the discretized forward model.

\section{Results}

In the following we present the results of two numerical tests on synthetic data, with the aim of estimating the potential of the nonlinear reconstruction approach. We use an operator $K$ as a given matrix (size $16512$ x $4225$) corresponding to a slice in a real PET scanner. A spatial discretization into $65 \times 65 = 4225$ pixels is chosen to match the operator resolution. The time step is chosen as $\tau = 3\cdot 10^{-5}$. Data are generated from forward simulations of the PDE system on fine grid with subsequent generation of Poisson noise.  

For the initisl radioactive concentration $C_{\mathcal{A}}$ in the artery we use the initial value
\begin{equation}\label{CAanfang}
C^0(x)= \tau(1 - x_1^2)(N - x_2)x_2
\end{equation}
with $N = 50$. 

A simple first test is to consider constant  parameters $k_1$, $k_2$ and $k_3$ (with a small random variation) in all pixels of the image, and their values of reconstruction are shown in the Table \ref{erstereconstk}. 

\begin{table}[H]
  \begin{center}
		\begin{tabular}{|c|c|} 
		 \hline
		  Parameter & Mean value of the reconstruction \\  
		 \hline
      $k_1 $ & 0.8263 $\pm 10^{-10}$ (1/cm)\\
      \hline
      $k_2 $ & 0.6886 $\pm 10^{-11}$ (1/cm)\\
      \hline
      $k_3 $ & 0.8264 $\pm 10^{-11}$ (1/cm)\\
      \hline
   \end{tabular}
   \caption{Reconstruction of $k_1$, $k_2$ and $k_3$}
   \label{erstereconstk}
  \end{center}
\end{table}
\FloatBarrier

In the following we report the results of two test with local defects in perfusion, the most relevant clinical case. To simulate such regions, we locally set the parameters $k_1$ and $k_2$ to zero
The table below shows the input values for the simulation. The first column refers to the biological parameter to be reconstructed and the second column contains the adopted value for each parameter at the beginning of numerical simulation. For further comparisons we refer to \cite{Levick, Wernick}, from where we deduced realistic values for all parameters.

\begin{table}[H] 
\begin{center}
		\begin{tabular}{|c|c|} 
		\hline
		 Parameter & Initial Value \\  
		 \hline
      $k_1 (*) (1/cm)$ & 0.9 (0)\\
      \hline
      $k_2 (*) (1/cm)$ & 0.75 (0)\\
      \hline
      $k_3 (1/cm)$ & 0.9 \\
      \hline
      $V_{x_{\mathcal{A}}} (cm/s)$ & 0.0001 \\
      \hline
      $V_{y_{\mathcal{A}}} (cm/s)$ & 700 \\
      \hline
      $V_{x_{\mathcal{T}}} (cm/s)$ & -50 \\
      \hline
      $V_{y_{\mathcal{T}}} (cm/s)$ & 0.0001 \\
      \hline
      $V_{x_{\mathcal{V}}} (cm/s)$ & 0.0001 \\
      \hline
      $V_{y_{\mathcal{V}}} (cm/s)$ & 700 \\
      \hline
      $D_{\mathcal{A}} (cm^2/s)$ & $3\cdot 10^{(-7)}$ \\
      \hline
      $D_{\mathcal{T}} (cm^2/s)$ & $3 \cdot 10^{(-6)}$ \\
      \hline
      $D_{\mathcal{V}} (cm^2/s)$ & $3 \cdot 10^{(-7)}$ \\
      \hline     
		\end{tabular}
				      \caption{Test data for reconstruction experiments}
		      \label{tabelaparanfangrealdata}
					\end{center}
		\end{table}
Here we also evaluate the behavior of radioactive flow on the region where $k_1$ and $k_2$ are equal to zero. Thus, in the above table, the symbol $(*)$ refers to the fact that $k_1$ and $k_2$ are not considered constant across the region of interest. When $k_1 = k_2 = 0$ there is no exchange of materials from the artery to the tissue and from the tissue to the vein, and this means that the radioactive concentration (in this region) in the tissue and in 
the vein are zero.

\subsection{Example 1: Small Defects in Perfusion}

We start with the case of a small defect in a thin region close to the left (inflow) boundary and present the corresponding reconstruction of parameters. Since the reaction rates $k_i$ are the relevant ones for medical issues, we focus on them in the following. 
The reconstruction of $k_3$ is almost constant (therefore the figure is omitted) with value $0.8061 \pm 10^{-9} /cm$. The following figures refer to the reconstruction of biological parameters for real PET-data: \\

\vspace{2.8cm}
\begin{center}
\begin{figure} [htb!]
\begin{minipage}{0.55\textwidth}
\hspace{0.7cm}
\includegraphics[scale=0.4]{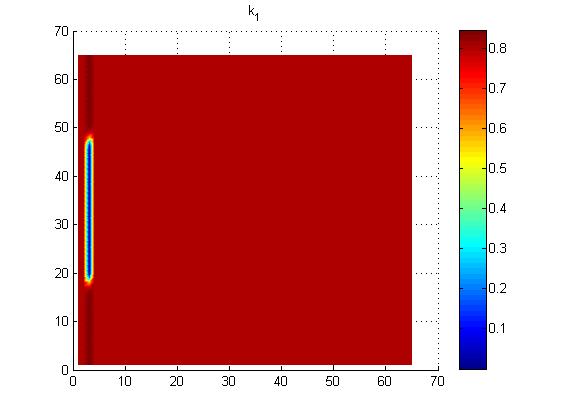}
\caption{Reconstruction of $k_1$}
\end{minipage}
\begin{minipage}[t]{0.45\textwidth}
\includegraphics[scale=0.4]{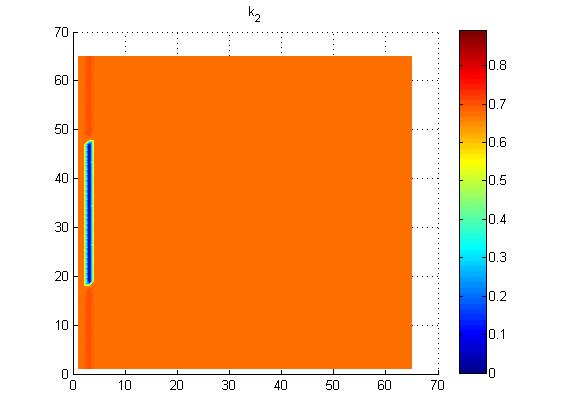}
\caption{Reconstruction of $k_2$}
\end{minipage}	
\end{figure}
\end{center}
\FloatBarrier



\subsection{Second Example: Inner dead region}


In the second case we consider a defect in smaller region in the interior.
For the radioactive concentration $C_{\mathcal{A}}$ in the artery we use the initial function given by the equation (\ref{CAanfang})  and use the time step $\tau = 3\cdot 10^{-5}$ in domain $\Omega$.  Again, the value of the reconstruction of $k_3$ is almost constant (therefore the figure is omitted) with value $0.0106 \pm 10^{-8} 1/cm$. 
%

The following figures refer to the reconstruction of biological parameters for realistic PET-data: 

\vspace{3.2cm}
\begin{center}
\begin{figure} [htb]
\begin{minipage}[t]{0.55\textwidth}
\hspace{0.8cm}
\includegraphics[scale=0.26]{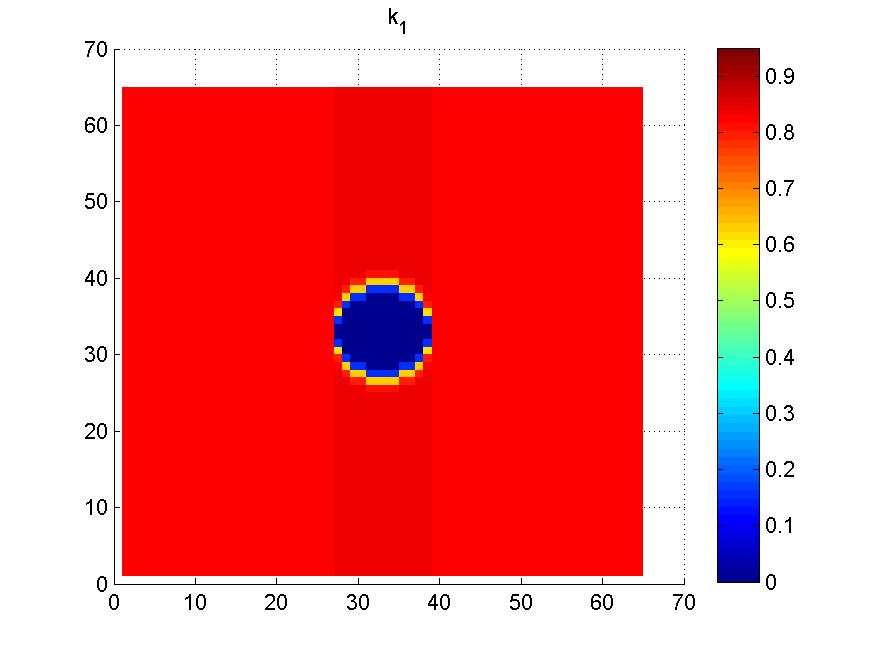}
\caption{Reconstruction of $k_1$}
\end{minipage}
\begin{minipage}[t]{0.45\textwidth}
\includegraphics[scale=0.26]{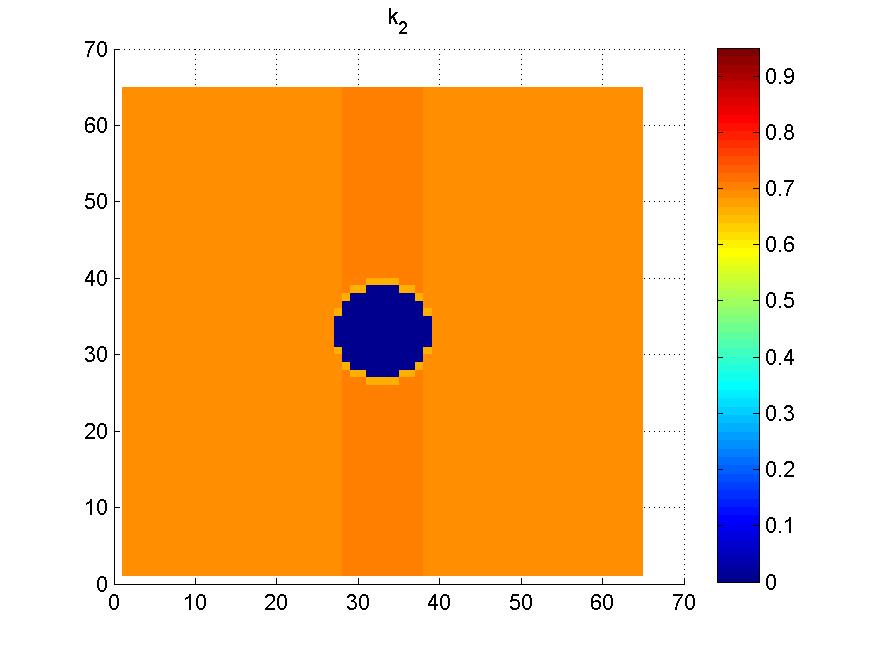}
\caption{Reconstruction of $k_2$}
\end{minipage}		
\end{figure}	
\end{center}
%
%
As we can see, the fact that $k_1$ and $k_2$ are equal to zero in the center is reflected in the graphics that represent the radioactive concentrations in tissue and vein, which remains zero in the same place. The plots of velocities below confirms the idea that sensitivity of data with respect to those is lower and the reconstruction is more difficult. However, the errors made in the velocities seems not to affect the the reconstruction of the defect regions too strongly.

\vspace{3.5cm}
\begin{center}
\begin{figure}[htb]
\begin{minipage}[t]{0.55\textwidth}
\hspace{0.8cm}
\includegraphics[scale=0.26]{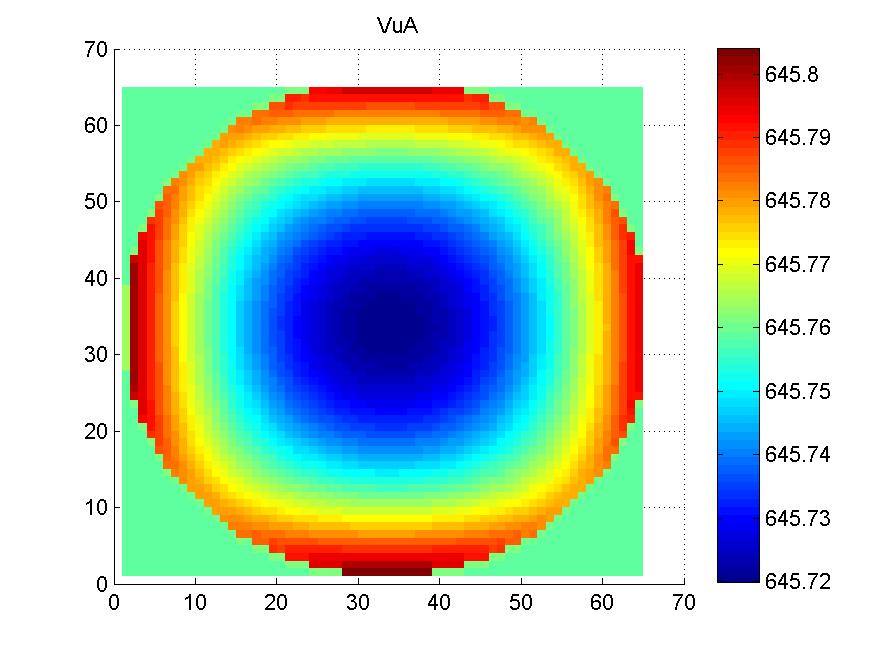}
\caption{Reconstruction of $V_{y_{\mathcal{A}}} $}
\end{minipage}
\begin{minipage}[t]{0.55\textwidth}
\includegraphics[scale=0.26]{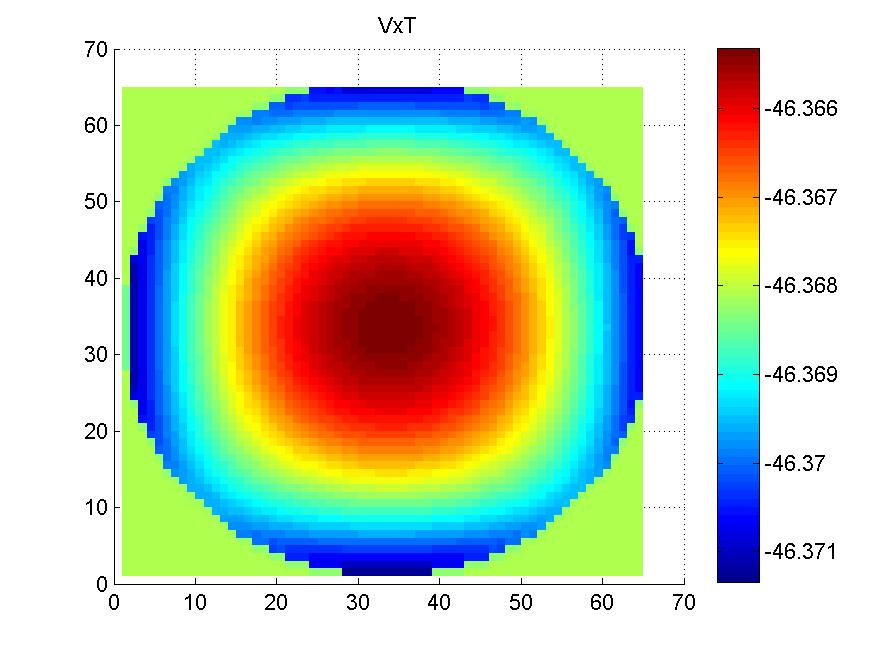}
\caption{Reconstruction of $V_{x_{\mathcal{T}}} $}
\end{minipage}
\end{figure}
\end{center}
\FloatBarrier
\begin{center}
\begin{figure}[H]
\hspace{4.5cm}
\includegraphics[scale=0.26]{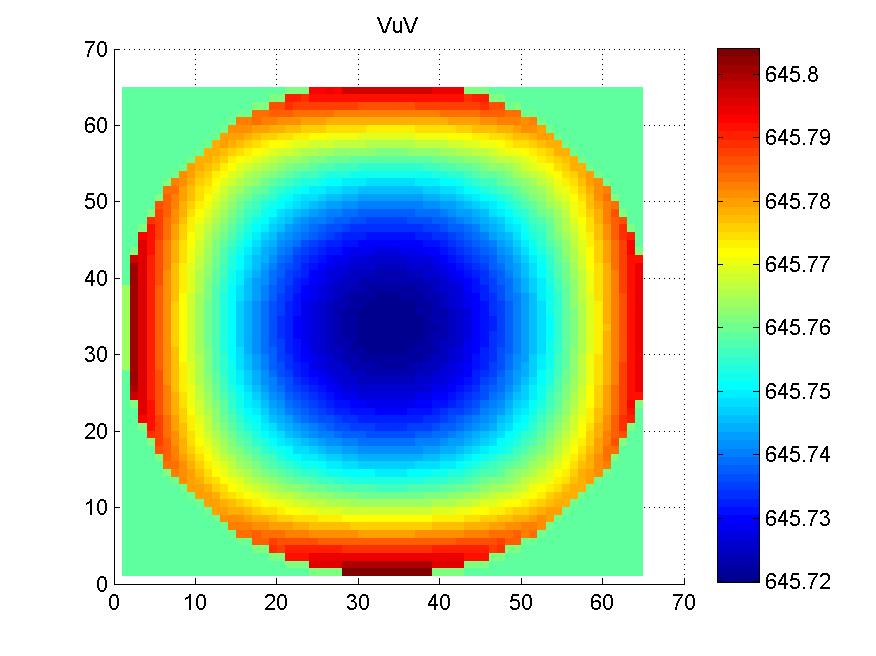}
\caption{Reconstruction of $V_{y_{\mathcal{V}}} $}
\end{figure}
\end{center}
\FloatBarrier


\section{Conclusion} 

In this paper we have introduced a novel approach for quantitative PET, which is capable of computing parameter reconstructions in presence of flow conditions. We have derived a novel model and a detailed analysis indicating the feasibility, which is confirmed by first computational tests. 

At least the following two issues are highly relevant for further research: From the practical point of view the efficient computational realization in three spatial dimensions and realistic PET setups. From a theoretical point of view it is highly relevant to understand conditions for unique identifiability of all parameters, first discussions in this direction can be found in \cite{Reips}.

\section*{Acknowledgements}

This work was carried out when LR was with the Institute for Computational and Applied Mathematics, WWU M\"unster. MB and RE acknowledge partial support by the German Science Foundation (DFG) via SFB 656, Subproject B2, and Cells-in-Motion Cluster of Excellence (EXC 1003 - CiM), WWU M\"unster, Germany. 


\bibliographystyle{plain}
\bibliography{referenc}

\section*{Appendix}
\appendix{\textbf{Adjoint Equations}}

The purpose of this section is the development of the parameter identification problem to allow the calculation of all the biological parameters that composes the vector $p$. 
Thus, minimizing the function below (with the regularization added) we can find the values that correspond to the desired physiological parameters
\begin{align*}
	\frac{1}{2} \int\limits_{0}^{T}\int\limits_{\Omega} \frac{\left(u - u_{k + \frac{1}{2}}\right)^{2}}{u_{k}} dx dt + \mathcal{R}(p) + \int\limits_{0}^{T}\int\limits_{\Omega} \left(G(p) - u\right)q\, dx
dt \rightarrow \min_{p} \, \text{,}
	\label{eqn:minproblem4}
\end{align*}
with $G(p) = G(p(x,t)) = u(x,t)$, for all $(x,t) \in \Omega \times [0,T]$. With the associated Lagrange functional one has
\begin{align*}
\mathcal{L}(u, p;q) = \dfrac{1}{2} \int\limits_{0}^{T} \int\limits_{\Omega} \dfrac{\left(u - u_{k+\frac{1}{2}}\right)^2}{u_k} dx dt + \mathcal{R}(p) + \int\limits_{0}^{T} \int\limits_{\Omega} (G(p) - u)\ q\ dx dt
\end{align*}

\normalsize
One must now calculate the optimality conditions to the problem, which means that all the partial Fr\'echet-derivatives must be zero. Thus, we obtain
\vspace{0.2cm}
\begin{equation*}\label{partialu}
\begin{split}
\dfrac{\partial \mathcal{L}}{\partial u} = \dfrac{u(x,t) - u_{k + \frac{1}{2}} (x,t)}{u_k(x,t)} - q(x,t) = 0
\end{split}
\end{equation*}

The optimality conditions for $k_1(x)$, $k_2(x)$, $k_3(x)$, $V_{\mathcal{T}}(x)$, $V_{\mathcal{A}}(x)$, $V_{\mathcal{V}}(x)$, $D_{\mathcal{T}}(x)$, $D_{\mathcal{A}}(x)$ and $D_{\mathcal{V}}(x)$ are
\begin{equation*}\label{derivk1completo}
\begin{split}
\dfrac{\partial \mathcal{L}}{\partial k_1} & = \alpha \big( \Lambda_{\mathcal{T}}(x)(k_1(x) - k_1^*) + \Lambda_{\mathcal{A}}(x)(k_1(x) - k_1^*) \big) -  \xi\big ( \Lambda_{\mathcal{T}}(x) \Delta k_1(x) +  \Lambda_{\mathcal{A}}(x) \Delta k_1(x) \big)\\
& - \int\limits_{0}^{T}  C_\mathcal{A}(x,t) \mu(x,t)dt + \int\limits_{0}^{T} C_\mathcal{A}(x,t) \eta(x,t)dt \\
\end{split}
\end{equation*}
\begin{equation*}\label{derivk2completo}
\begin{split}
\dfrac{\partial \mathcal{L}}{\partial k_2} & = \alpha \big( \Lambda_{\mathcal{T}}(x)(k_2(x) - k_2^*) + \Lambda_{\mathcal{V}}(x)(k_2(x) - k_2^*) \big) -  \xi\big ( \Lambda_{\mathcal{T}}(x) \Delta k_2(x) +  \Lambda_{\mathcal{V}}(x) \Delta k_2(x) \big)\\
& + \int\limits_{0}^{T}  C_\mathcal{T}(x,t) \mu(x,t)dt - \int\limits_{0}^{T} C_\mathcal{T}(x,t) \gamma(x,t)dt \\
\end{split}
\end{equation*}
\begin{equation*}\label{derivk3completo}
\begin{split}
\dfrac{\partial \mathcal{L}}{\partial k_3} & = \alpha \big( \Lambda_{\mathcal{A}}(x)(k_3(x) - k_3^*) + \Lambda_{\mathcal{V}}(x)(k_3(x) - k_3^*) \big) -  \xi\big ( \Lambda_{\mathcal{A}}(x) \Delta k_3(x) +  \Lambda_{\mathcal{V}}(x) \Delta k_3(x) \big)\\
& - \int\limits_{0}^{T}  C_\mathcal{V}(x,t) \eta(x,t)dt + \int\limits_{0}^{T} C_\mathcal{V}(x,t) \gamma(x,t)dt \\
\end{split}
\end{equation*}
\begin{equation*}\label{partialVTcompleto}
\begin{split}
\dfrac{\partial \mathcal{L}}{\partial V_\mathcal{T}} & = \int\limits_{0}^{T}  V_\mathcal{T}(x) \cdot \nabla \mu(x,t)\hspace{0.1cm} dt + \alpha(V_{\mathcal{T}}(x) - V_{\mathcal{T}}^{*}) - \xi(\Lambda_{\mathcal{T}}(x) \Delta V_{\mathcal{T}}(x))\\
\end{split}
\end{equation*}

\begin{equation*}\label{partialVAcompleto}
\begin{split}
\dfrac{\partial \mathcal{L}}{\partial V_\mathcal{A}} & = \int\limits_{0}^{T}  V_\mathcal{A}(x) \cdot \nabla \eta(x,t) \hspace{0.1cm} dt + \alpha(V_{\mathcal{A}}(x) - V_{\mathcal{A}}^{*}) - \xi(\Lambda_{\mathcal{A}}(x) \Delta V_{\mathcal{A}}(x))\\
\end{split}
\end{equation*}
\begin{equation*}\label{partialVVcompleto}
\begin{split}
\dfrac{\partial \mathcal{L}}{\partial V_\mathcal{V}} & = \int\limits_{0}^{T}  V_\mathcal{V}(x) \cdot \nabla \gamma(x,t) \hspace{0.1cm} dt + \alpha(V_{\mathcal{V}}(x) - V_{\mathcal{V}}^{*}) - \xi(\Lambda_{\mathcal{V}}(x) \Delta V_{\mathcal{V}}(x))\\
\end{split}
\end{equation*}
\begin{equation*}\label{partialDTcompleto}
\begin{split}
\dfrac{\partial \mathcal{L}}{\partial D_\mathcal{T}} & = \int\limits_{0}^{T}  \nabla C_\mathcal{T}(x) \cdot \nabla \mu(x,t) \hspace{0.1cm} dt + \alpha(D_{\mathcal{T}}(x) - D_{\mathcal{T}}^{*}) - \xi(\Lambda_{\mathcal{T}}(x) \Delta D_{\mathcal{T}}(x)) \\
\end{split}
\end{equation*}
\begin{equation*}\label{partialDAcompleto}
\begin{split}
\dfrac{\partial \mathcal{L}}{\partial D_\mathcal{A}} & = \int\limits_{0}^{T}  \nabla C_\mathcal{A}(x) \cdot \nabla \eta(x,t)\hspace{0.1cm}  dt + \alpha(D_{\mathcal{A}}(x) - D_{\mathcal{A}}^{*}) - \xi(\Lambda_{\mathcal{A}}(x) \Delta D_{\mathcal{A}}(x)) \\
\end{split}
\end{equation*}
\begin{equation*}\label{partialDVcompleto}
\begin{split}
\dfrac{\partial \mathcal{L}}{\partial D_\mathcal{V}} & = \int\limits_{0}^{T}  \nabla C_\mathcal{V}(x) \cdot \nabla \gamma(x,t) \hspace{0.1cm}dt + \alpha(D_{\mathcal{V}}(x) - D_{\mathcal{V}}^{*}) - \xi(\Lambda_{\mathcal{V}}(x) \Delta D_{\mathcal{V}}(x)) \\
\end{split}
\end{equation*}
And we apply the Forward-Backward Splitting method for all parameters that composes the vector $p$ to obtain:

\begin{equation*}\label{k_1neu}
\begin{split}
k_1^{k+1}(x,y) & = (1 + 2 \alpha \tau - 2 \xi \tau B_x - 2\xi \tau B_y)^{-1} \\
& \left (k_1^{k}(x,y) + \tau \int\limits_{0}^{T} C_{\mathcal A}(x,y,t) \mu(x,y,t) dt - \tau \int\limits_{0}^{T} C_{\mathcal A}(x,y,t) \eta(x,y,t) dt + 2\alpha \tau k_1^{*} \right)\\
\end{split}
\end{equation*}
\begin{equation*}\label{k_2neu}
\begin{split}
k_2^{k+1}(x,y) & = (1 + 2 \alpha \tau - 2 \xi \tau B_x - 2\xi \tau B_y)^{-1} \\
& \left (k_2^{k}(x,y) - \tau \int\limits_{0}^{T} C_{\mathcal T}(x,y) \mu(x,y) dt + \tau \int\limits_{0}^{T} C_{\mathcal T}(x,y,t) \gamma(x,y,t) dt + 2\alpha \tau k_2^{*} \right)\\
\end{split}
\end{equation*}
\begin{equation*}\label{k_3neu}
\begin{split}
k_3^{k+1}(x,y) & = (1 + 2 \alpha \tau - 2 \xi \tau B_x - 2\xi \tau B_y)^{-1}\\
& \left (k_3^{k}(x,y,t) + \tau(x,y,t) \int\limits_{0}^{T} C_{\mathcal V}(x,y,t) \eta(x,y,t) dt + \tau \int\limits_{0}^{T} C_{\mathcal V}(x,y,t) \gamma(x,y,t) dt + 2\alpha \tau k_3^{*} \right)\\
\end{split}
\end{equation*}
\begin{equation*}\label{VTneu}
V_{\mathcal{T}}^{k+1}(x,y) = (1 -  \alpha \tau + \xi \tau B_x + \xi \tau B_y)^{-1} \left (V_{\mathcal{T}}^{k}(x,y) - \tau V_{\mathcal{T}}^{k}(x,y) \cdot \nabla \int\limits_{0}^{T} \mu (x,y,t) dt + \alpha \tau V_{\mathcal{T}}^{*} \right)
\end{equation*}
\begin{equation*}\label{VAneu}
V_{\mathcal{A}}^{k+1}(x,y) = (1 -  \alpha \tau + \xi \tau B_x + \xi \tau B_y)^{-1} \left (V_{\mathcal{A}}^{k}(x,y) - \tau V_{\mathcal{A}}^{k}(x,y,t) \cdot \nabla \int\limits_{0}^{T} \eta(x,y,t) dt + \alpha \tau V_{\mathcal{A}}^{*} \right)
\end{equation*}
\begin{equation*}\label{VVneu}
V_{\mathcal{V}}^{k+1}(x,y) = (1 -  \alpha \tau + \xi \tau B_x + \xi \tau B_y)^{-1} \left (V_{\mathcal{V}}^{k}(x,y) - \tau V_{\mathcal{V}}^{k}(x,y) \cdot \nabla \int\limits_{0}^{T} \gamma(x,y,t) dt + \alpha \tau V_{\mathcal{V}}^{*} \right)
\end{equation*}
\begin{equation*}\label{DTneu}
\begin{split}
D_{\mathcal{T}}^{k+1}(x,y) & = (1 + \alpha \tau - \xi \tau B_x - \xi \tau B_y)^{-1} \\
& \left (D_{\mathcal{T}}^{k}(x,y) - \tau (\nabla D_{\mathcal{T}}^{k}(x,y) \int\limits_{0}^{T} \nabla \mu(x,y,t) dt )+ \alpha \tau D_{\mathcal{T}}^{*} \right)\\
\end{split}
\end{equation*}
\begin{equation*}\label{DAneu}
\begin{split}
D_{\mathcal{A}}^{k+1}(x,y) & = (1 + \alpha \tau - \xi \tau B_x - \xi \tau B_y)^{-1} \\
& \left (D_{\mathcal{A}}^{k}(x,y) - \tau (\nabla D_{\mathcal{A}}^{k}(x,y) \int\limits_{0}^{T} \nabla \eta(x,y,t) dt )+ \alpha \tau D_{\mathcal{A}}^{*} \right)\\
\end{split}
\end{equation*}
\begin{equation*}\label{DVneu}
\begin{split}
D_{\mathcal{V}}^{k+1}(x,y) & = (1 + \alpha \tau - \xi \tau B_x - \xi \tau B_y)^{-1} \\
& \left (D_{\mathcal{V}}^{k}(x,y) - \tau (\nabla D_{\mathcal{V}}^{k}(x,y) \int\limits_{0}^{T} \nabla \gamma(x,y,t) dt )+ \alpha \tau D_{\mathcal{V}}^{*} \right)\\
\end{split}
\end{equation*}

A good choice of $\tau$ defines a significant speedup, because the dependence on the ill-posedness of the operator $K$ (the ill-conditioning of the matrix that represents the discretization of $K$)
can make the iterative scheme very slow. \\

\appendix{\textbf{Example 1: Small Defects in Perfusion - Regularization Parameters}}

The following table shows all the regularization parameters for the example 1:

\begin{table}[H] 
\begin{center}
		\begin{tabular}{|c|c|c|c|} 
		\hline
		 Parameter & $(\cdot)^*$& A-p. Regularization ($\alpha$) & Gradient regularization ($\xi$) \\  
		 \hline
      $k_1 $ &  0.89 & 0.0171 & 0.0008 \\
      \hline
      $k_2 $ &  0.7 & 0.0158 & 0.0001 \\
      \hline
      $k_3 $ & 0.85 & 0.0164 & 0.0001 \\
      \hline
      $V_{x_{\mathcal{A}}} $ & 0.1 & 0.0010 & 0.0001 \\
      \hline
      $V_{y_{\mathcal{A}}} $ & 15 & 1.1000 & 0.0001 \\
      \hline
      $V_{x_{\mathcal{T}}} $ & -5 & 1.1220 & 0.0001 \\
      \hline
      $V_{y_{\mathcal{T}}} $ & 0.1 & 0.0010 & 0.0001 \\
      \hline
      $V_{x_{\mathcal{V}}} $ & 0.1 & 0.0010 & 0.0001 \\
      \hline
      $V_{y_{\mathcal{V}}} $ & 15 & 1.1000 & 0.0001 \\
      \hline
      $D_{\mathcal{A}} $ & $10^{(-3)}$ & 0.0003 & 0.0004 \\
      \hline
      $D_{\mathcal{T}} $ & $10^{(-2)}$ & 0.0003 & 0.0004 \\
      \hline
      $D_{\mathcal{V}} $ & $10^{(-3)}$ & 0.0003 & 0.0004 \\
      \hline     
		\end{tabular}
		      \caption{Input regularization parameters for a first real example}
		      \label{tabelaparanfangregularizationdata}
					\end{center}
		\end{table}

The $(\cdot)^*$ refers to a-priori knowledge in the regularization functional for each parame-\\ ter of the problem. Whereas, for example, the velocity of the radioactive concentration in the artery has a typical value of $V_{\mathcal{A}}^{*}$, we can regularize $V_{\mathcal{A}}$ by
\begin{equation*} \label{regaprioriestrela}
\mathcal{R}(V_{\mathcal{A}}(x)) = \dfrac{\alpha}{2} \int\limits_{\Omega} (V_{\mathcal{A}} - V_{\mathcal{A}}^{*})^2 dx
\end{equation*}
%
where $\alpha$ (values shown in the third column) denotes the regularization parameter, $\alpha \in \mathbb{R}_{+}$. 

Like the a-priori regularization we apply the Gradient regularization in each parameter independently. The regularization of the gradient is designed to ensure (guarantee) smoothness in space and time, adding a bound to the spatial gradients ($\nabla k_1, \nabla k_2, \nabla k_3, \\ \nabla V_{\mathcal{A}}, \nabla V_{\mathcal{T}}, \nabla V_{\mathcal{V}}, \nabla D_{\mathcal{A}}, \nabla D_{\mathcal{T}}, \nabla D_{\mathcal{V}}$). The regularization added to the terms is given by
\begin{equation*}
\mathcal{R}_{\xi, \Phi} (g) = \dfrac{\xi}{2} \int\limits_{\Phi} |\nabla g(x)|^2 dx
\end{equation*}
with $\Phi \in \Omega$. Thus, the fourth column refers to the terms $\xi$ for each biological parameter in the above equation.
%


\end{document}